\newtheorem{theorem}{Theorem}
\newtheorem{lemma}{Lemma}
\newtheorem{cor}{Corollary}
\newtheorem{prop}{Proposition}
\newcommand{\cal}{\mathcal}
\title[Product twistor spaces and Weyl geometry] {Product twistor spaces and Weyl geometry}
\author{Johann Davidov}
\thanks{The author is partially supported by  the National Science
Fund, Ministry of Education and Science of Bulgaria under contract
DN 12/2}
\address{Institute of Mathematics and Informatics \\
Bulgarian Academy of Sciences\\ Acad. G.Bonchev Str. Bl.8\\
1113 Sofia\\ Bulgaria} \email{jtd@math.bas.bg}
\begin{document}

\begin{abstract}

Motivated by generalized geometry (\`a la Hitchin), we discuss the
integrability conditions for four natural almost complex structures
on the pro-duct bundle ${\mathcal Z}\times {\mathcal Z}\to M$, where
${\mathcal Z}$ is the twistor space of a Riemannian 4-manifold $M$
endowed with a metric connection $D$ with skew-symmetric torsion.
These structures are defined by means of the connection $D$ and four
(K\"ahler) complex structures on the fibres of this bundle. Their
integrability conditions are interpreted in terms of Weyl geometry
and this is used to supply examples satisfying the conditions.

\vspace{0,1cm} \noindent 2010 {\it Mathematics Subject Classification} 53C28; 53C15, 53D18.

\vspace{0,1cm} \noindent {\it Key words: twistor spaces, almost
complex structures, skew-symmetric torsion, Weyl geometry}.

\end{abstract}

\maketitle 

\hfill Dedicated to the memory of Thomas Friedrich

\vspace{0.5cm}

\section{Introduction}

The motivation for this paper comes from generalized geometry in the
sense of N. Hitchin \cite{Hit02}.

Recall that a generalized metric \cite{Gu, Witt} on a smooth
manifold $M$ is a subbundle $E$ of $TM\oplus T^{\ast}M$ such that
the restriction to $E$ of the metric
$<X+\alpha,Y+\beta>=\alpha(Y)+\beta(X)$ is positive definite,
$X,Y\in TM$, $\alpha,\beta\in T^{\ast}M$. Every generalized metric
is determined by a positive definite metric $g$ and a skew-symmetric
$2$-form $\Theta$ on $M$ such that
$E=\{X+\imath_{X}g+\imath_{X}\Theta: X\in TM\}$. Moreover, $E$
determines a connection on $M$ which is metric with respect to $g$
and whose torsion form is skew-symmetric, equal to $d\Theta$, see
\cite{Hit10}.

Given a generalized metric $E$, denote by ${\mathcal G}(E)$ the
bundle over $M$ whose fibre at a point $p\in M$ consists of
generalized complex structures $J$ on the tangent space $T_pM$
compatible with the generalized metric $E_p$, the fibre of $E$ at
$p$, in the sense that $JE_p\subset E_p$. The bundle ${\mathcal
G}(E)$, called the generalized twistor space of $(M,E)$ in
\cite{Dav}, admits a generalized almost complex structure ${\mathcal
J}$ defined by means of the connection determined by $E$. This
structure resembles the Atiyah-Hitchin-Singer  almost complex
structure on the usual twistor space \cite{AHS}. The integrability
condition for ${\mathcal J}$ (in the sense of generalized geometry)
has been found in \cite{Des} in the case when $dim\,M=4$ and
$\Theta=0$, and in \cite{Dav} in the general case. Besides the
generalized almost complex structure ${\mathcal J}$, the manifold
${\mathcal G}(E)$ admits four natural  almost complex structure
$\mathscr{J}^m$ in the usual sense since ${\mathcal G}(E)$ is
diffeomorphic to the product bundle ${\mathcal Z}\times {\mathcal
Z}\to M$ where ${\mathcal Z}$ is the (usual) twistor space of the
Riemannian manifold $(M,g)$, see, for example, \cite{Dav}. They are
defined by means of a metric connection $D$ on $M$ with
skew-symmetric torsion and four complex structures (in fact,
K\"ahler ones) on the fibres of ${\mathcal Z}\times {\mathcal Z}$,
and are not products of almost complex structures on ${\mathcal Z}$.
Two of these structures, denoted by ${\mathscr J}^1$ and ${\mathscr
J}^2$, can be considered as analogs of  the Atiyah-Hitchin-Singer
almost complex structure, while ${\mathscr J}^3$ and ${\mathscr
J}^4$ correspond to the Eells-Salamon almost complex structure on
${\mathcal Z}$ \cite{ES}. In the this paper, we deal with the
classical integrability problem for these structures. The
integrability condition for the almost complex structure ${\mathscr
J}^1$ has been found in \cite{Des} when $dim\,M=4$ and $\Theta=0$
(so $D$ is the Levi-Civita connection of $g$). In the present paper,
we also consider the most interesting case of an oriented
four-dimensional manifold, but the connection used here has
skew-symmetric torsion. In order to find the corresponding
integrability condition, we briefly discuss the
Atiyah-Hitchin-Singer almost complex structure on ${\mathcal Z}$
defined by means of a metric connection with skew-symmetric torsion.
The integrability condition for this structure does not depend on
the torsion and is (anti-) self-duality of the metric. This,
suggested by certain facts, formally follows easily from a result in
\cite{D-V}. It is a simple observation that the almost complex
structures ${\mathscr J}^3$ and ${\mathscr J}^4$ are never
integrable. On the other hand, the integrability conditions for the
restrictions of $\mathscr{J}^1$ and ${\mathscr J}^2$ to the
connected components of ${\mathcal Z}\times{\mathcal Z}$ involve
(anti-) self-duality of the metric and certain relations between the
Ricci curvature of the Levi-Civita connection and the torsion of the
given connection. The latter are interpreted in terms of Weyl
geometry and this is used to supply examples of manifolds satisfying
the integrability conditions.

\section{Preliminaries}

Let $(M,g)$ be an oriented  Riemannian manifold of dimension four.
The metric $g$ induces a metric on the bundle of two-vectors
$\pi:\Lambda^2TM\to M$ by the formula
$$
g(v_1\wedge v_2,v_3\wedge v_4)=\frac{1}{2}det[g(v_i,v_j)],
$$
the  factor $1/2$ being chosen in consistence with \cite{DM89}.

Let $\ast:\Lambda^kTM\to \Lambda^{4-k}M$, $k=0,...,4$, be the Hodge
star operator. Its restriction to $\Lambda^2TM$ is an involution,
thus we have the orthogonal decomposition
$$
\Lambda^2TM=\Lambda^2_{-}TM\oplus\Lambda^2_{+}TM,
$$
where $\Lambda^2_{\pm}TM$ are the subbundles of $\Lambda^2TM$
corresponding to the $(\pm 1)$-eigenvalues of the operator $\ast$

\smallskip

Let $(E_1,E_2,E_3,E_4)$ be a local oriented orthonormal frame of
$TM$. Set
\begin{equation}\label{s-basis}
s_1^{\pm}=E_1\wedge E_2\pm E_3\wedge E_4, \quad s_2^{\pm}=E_1\wedge
E_3\pm E_4\wedge E_2, \quad s_3^{\pm}=E_1\wedge E_4\pm E_2\wedge
E_3.
\end{equation}
Then $(s_1^{\pm},s_2^{\pm},s_3^{\pm})$ is a local orthonormal frame
of $\Lambda^2_{\pm}TM$. This frame defines an orientation on
$\Lambda^2_{\pm}TM$ which does not depend on the choice of the frame
$(E_1,E_2,E_3,E_4)$ (see, for example, \cite{D17}). We call this
orientation "canonical".

For every $a\in\Lambda ^2TM$, define a skew-symmetric endomorphism
of $T_{\pi(a)}M$ by

\begin{equation}\label{cs}
g(K_{a}X,Y)=2g(a, X\wedge Y), \quad X,Y\in T_{\pi(a)}M.
\end{equation}
Denoting by $G$ the standard metric $-\frac{1}{2}Trace\,PQ$ on the
space of skew-symmetric endomorphisms, we have $G(K_a,K_b)=2g(a,b)$
for $a,b\in \Lambda ^2TM$. If $a\in\Lambda^2TM$ is of unit length,
then $K_a$ is a complex structure on the vector space $T_{\pi(a)}M$
compatible with the metric $g$, i.e., $g$-orthogonal. Conversely,
the $2$-vector $a$ dual to one half of the fundamental $2$-form of
such a complex structure is a unit vector in $\Lambda^2TM$.
Therefore the unit sphere bundle ${\mathcal Z}$ of $\Lambda ^2TM$
parametrizes the complex structures on the tangent spaces of $M$
compatible with the metric $g$. This bundle is called the twistor
space of the Riemannian manifold $(M,g)$. Since $M$ is oriented, the
manifold ${\mathcal Z}$ has two connected components ${\mathcal
Z}_{\pm}$ called the positive and the negative twistor spaces of
$(M,g)$. These are the unit sphere subbundles of
$\Lambda^2_{\pm}TM$. The bundle ${\mathcal Z}_{\pm}\to M$
parametrizes  the complex structures on the tangent spaces of $M$
compatible with the metric and  $\pm$ the orientation via the
correspondence ${\mathcal Z}_{\pm}\ni\sigma\to K_{\sigma}$.

\smallskip

The vertical space ${\mathcal V}_{\sigma}=\{V\in T_{\sigma}{\mathcal
Z}_{\pm}:~ \pi_{\ast}V=0\}$ of the bundle $\pi:{\mathcal Z}_{\pm}\to
M$ at a point $\sigma$ is the tangent space to the fibre of
${\mathcal Z}_{\pm}$ through $\sigma$. Thus, considering
$T_{\sigma}{\mathcal Z}_{\pm}$ as a subspace of
$T_{\sigma}(\Lambda^2_{\pm}TM)$, ${\mathcal V}_{\sigma}$ is the
orthogonal complement of ${\Bbb R}\sigma$ in
$\Lambda^2_{\pm}T_{\pi(\sigma)}M$.


Let $D$ be a metric connection on $(M,g)$. The induced connection on
$\Lambda^2TM$ will also be denoted by $D$. If
$(s_1^{\pm},s_2^{\pm},s_3^{\pm})$ is the orthonormall frame of
$\Lambda ^2_{\pm}TM$ defined by means of an oriented orthonormal
frame $(E_1,...,E_4)$ of $TM$ via (\ref{s-basis}), we have
$g(D_{X}s_i^{+},s_j^{-})= g(D_{X}s_i^{-},s_j^{+})=0$ and
$g(D_{X}s_i^{\pm},s_j^{\pm})=-g(D_{X}s_j^{\pm},s_i^{\pm})$ for every
$X\in TM$ and every $i,j=1,2,3$. Therefore the connection $D$
preserves the bundles $\Lambda^2_{\pm}TM$ and induces a metric
connection on each of these bundles denoted again by $D$. Let
$\sigma\in{\mathcal Z}_{\pm}$,  and let $s$ be a local section of
${\mathcal Z}_{\pm}$ such that $s(p)=\sigma$ where $p=\pi(\sigma)$.
Considering $s$ as a section of $\Lambda^2_{\pm}TM$, we have
$D_{X}s\perp s(p)$, i.e., $D_{X}s\in{\cal V}_{\sigma}$ for every
$X\in T_pM$ since $s$ has a constant length. Moreover,
$X^h_{\sigma}=s_{\ast}X-D_{X}s\in T_{\sigma}{\mathcal Z}_{\pm}$ is
the horizontal lift of $X$ at ${\sigma}$ with respect the connection
$D$ on $\Lambda^2_{\pm}TM$. Thus, the horizontal distribution of
$\Lambda^2_{\pm}TM$ with respect to $D$ is tangent to the twistor
space ${\mathcal Z}_{\pm}$. In this way, we have the decomposition
$T{\mathcal Z}_{\pm}={\mathcal H}\oplus {\mathcal V}$ of the tangent
bundle of ${\mathcal Z}_{\pm}$ into horizontal and vertical
components. The horizontal and vertical parts of a tangent vector
$A\in T{\mathcal Z}_{\pm}$ will be denoted by ${\mathcal H}A$ and
${\mathcal V}A$, respectively.

For $\sigma\in{\mathcal Z}_{\pm}$, the horizontal space ${\mathcal
H}_{\sigma}$ is isomorphic to the tangent space $T_{\pi(\sigma)}M$
via the differential $\pi_{\ast\,\sigma}$. The vertical space
${\mathcal V}_{\sigma}$ is tangent to the unit sphere in the
$3$-dimensional vector space $(\Lambda^2_{\pm}T_{\pi(\sigma)}M,g)$,
and we denote by ${\mathcal J}_{\sigma}$ the standard complex
structure of the unit sphere restricted to ${\mathcal V}_{\sigma}$.
It is given by
$$
{\mathcal J}_{\sigma}V=\pm(\sigma\times V),\quad V\in{\mathcal
V}_{\sigma},
$$
where $\times$ is the usual vector-cross product on the
$3$-dimensional Euclidean space
$(\Lambda^2_{\pm}T_{\pi(\sigma)}M,g)$ endowed with its canonical
orientation.

\smallskip

\noindent {\bf Convention}. In what follow, we shall freely identify
the bundle $\Lambda ^2TM$ with the bundle $A(TM)$ of
$g$-skew-symmetric endomorphism of $TM$ by means of the isomorphism
$a\to K_a$ defined by (\ref{cs}).

\smallskip

Using the basis (\ref{s-basis}), it is easy to check that if
$a,b\in\Lambda^2_{\pm}T_pM$, the isomorphism $\Lambda^2TM\cong
A(TM)$ sends $a\times b$ to $\pm\frac{1}{2}[K_a,K_b]$. In the case
when $a\in\Lambda^2_{+}T_pM$, $b\in\Lambda^2_{-}T_pM$, the
endomorphisms $K_a$ and $K_b$ of $T_pM$ commute. If
$a,b\in\Lambda_{\pm}T_pM$,
$$
K_{a}\circ K_{b}=-g(a,b)Id\pm K_{a\times b}.
$$
In particular, $K_a$ and $K_b$, $a,b\in\Lambda_{\pm}T_pM$,
anti-commute if and only if $a$ and $b$ are orthogonal.

\smallskip

The $6$-manifold ${\mathcal Z}_{\pm}$ admits two almost complex
structures ${\cal J}_1$ and ${\cal J}_2$ introduced, respectively,
by Atiyah-Hitchin-Singer \cite{AHS}, and Eells-Salamon \cite{ES} in
the case when the connection $D$ is the Levi-Civita connection of
$(M,g)$. On a horizontal space ${\mathcal H}_{\sigma}$,
$\sigma\in{\mathcal Z}_{\pm}$, the structures ${\mathcal J}_1$ and
${\mathcal J}_2$ are both defined as the lift to ${\mathcal
H}_{\sigma}$ of the complex structure $K_{\sigma}$ on
$T_{\pi(\sigma)}M$. On a vertical space ${\mathcal V}_{\sigma}$,
${\cal J}_1$ is defined to be the complex structure ${\cal
J}_{\sigma}$ of the fibre through $\sigma$, while ${\mathcal J}_2$
is defined as the conjugate complex structure, i.e., ${\mathcal
J}_{2}|{\mathcal V}_{\sigma}=-{\cal J}_{\sigma}$. Thus, for
$J\in{\mathcal Z}_{\pm}$,
\begin{equation}\label{ACS}
\begin{array}{c}
{\cal J}_{n}|{\mathcal H}_{J}=(\pi_{\ast}|{\mathcal
H}_{J})^{-1}\circ J \circ\pi_{\ast}|{\mathcal
H}_{J}\\[8pt]

{\cal J}_{n}V=(-1)^{n+1}(J\circ V )\;~~\mbox {for} ~~\; V\in
\mathcal{V_{J}},\;\;\;\;n=1,2.
\end{array}
\end{equation}

\smallskip

Suppose that $D$ is the Levi-Civita connection $\nabla$ of $(M,g)$.
By a result of Eells-Salamon \cite{ES}, the almost complex structure
${\mathcal J}_2$ is never integrable, so it does not come from a
complex structure. Nevertheless, ${\mathcal J}_2$ is very useful for
constructing harmonic maps. The integrability condition for
${\mathcal J}_1$ has been found by Atiyah-Hitchin-Singer \cite{AHS}.
To state their result, we first recall  the well-known curvature
decomposition in dimension four. For the curvature tensor $R^{D}$ of
a connection $D$ on $M$, we adopt the following definition:
$R^D(X,Y)=D_{[X,Y]}-[D_{X},D_{Y}]$. The curvature operator
${\mathcal R}^D$ corresponding to the curvature tensor is the
endomorphism of $\Lambda ^{2}TM$ defined by
$$
g({\mathcal R}^D(X\wedge Y),Z\wedge U)=g(R^D(X,Y)Z,U),\quad
X,Y,Z,U\in TM.
$$
Now, let $\nabla$ be the Levi-Civita connection of $(M,g)$. Denote
by $\rho:TM\to TM$ its Ricci operator, $g(\rho(X),Y)=Ricci(X,Y)$,
and by $s$ the scalar curvature. Then the endomorphism ${\cal
B}:\Lambda^2TM\to \Lambda^2TM$  corresponding to the traceless Ricci
tensor is given by
\begin{equation}\label{B}
{\mathcal B}(X\wedge Y)=\rho(X)\wedge
Y+X\wedge\rho(Y)-\frac{s}{2}X\wedge Y.
\end{equation}
Note that ${\mathcal B}$ sends  $\Lambda^2_{\pm}TM$ into
$\Lambda^2_{\mp}TM$. Let ${\mathcal W}: \Lambda^2TM\to \Lambda^2TM$
be the endomorphism corresponding to the Weyl conformal tensor.
Denote the restriction of ${\mathcal W}$ to $\Lambda^2_{\pm}TM$ by
${\mathcal W}_{\pm}$, so ${\mathcal W}_{\pm}$ sends
$\Lambda^2_{\pm}TM$ to $\Lambda^2_{\pm}TM$ and vanishes on
$\Lambda^2_{\mp}TM$.

It is well known that the curvature operator decomposes as
(\cite{ST}, see, e.g., \cite[Chapter 1 H]{Besse})
\begin{equation}\label{dec}
{\mathcal R}^{\nabla}=\frac{s}{6}Id+{\mathcal B}+{\mathcal
W}_{+}+{\mathcal W}_{-}
\end{equation}
Note that this differs from \cite{Besse} by  a factor $1/2$  because
of the factor $1/2$ in our definition of the induced metric on
$\Lambda^2TM$.

The Riemannian manifold $(M,g)$ is Einstein exactly when ${\mathcal
B}=0$. It is called self-dual (anti-self-dual) if ${\mathcal
W}_{-}=0$ (resp. ${\mathcal W}_{+}=0$). The self-duality
(anti-self-duality) condition is invariant under conformal changes
of the metric since the Weyl tensor is so.

Note that changing the orientation of $M$ interchanges the roles of
$\Lambda^2_{-}TM$ and $\Lambda^2_{+}TM$ (respectively, of ${\mathcal
Z}_{-}$ and ${\mathcal Z}_{+}$), hence the roles of ${\mathcal
W}_{-}$ and ${\mathcal W}_{+}$.

The famous Atiyah-Hitchin-Singer  theorem \cite{AHS} states that the
almost complex structure ${\cal J}_1$ on ${\cal Z}_{-}$ (resp.
${\cal Z}_{+}$) is integrable if and only if $(M,g)$ is self-dual
(resp. anti-self-dual).

\section{Technical lemmas}

Let $D$ be a metric connection on $(M,g)$, and consider the almost
complex structures ${\mathcal J}_n$, $n=1,2$, defined on the
positive twistor space ${\mathcal Z}_{+}$ by means of the connection
$D$. Denote by ${\mathcal N}_n$ the Nijenhuis tensor of the almost
complex structure ${\mathcal J}_n$:
$$
{\mathcal N}_n(A,B)=-[A,B]+[{\mathcal J}_nA,{\mathcal
J}_nB]-{\mathcal J}_n[{\mathcal J}_nA,B]-{\mathcal J}_n[A,{\mathcal
J}_nB].
$$
Let $T$ be the torsion tensor of the connection $D$.
\begin{lemma}\label{Nijen}\rm{(\cite{BBO,DM89,D-V,G,OR})}
Let $J\in{\mathcal Z}_{+}$, $V,W\in{\mathcal V}_J$, and let $X,Y\in
T_{\pi(J)}M$. Then
$$
\begin{array}{l}
(i)\>{\mathcal H}{\mathcal
N}_n(X^h,Y^h)_J=(T(X,Y)-T(JX,JY)+JT(JX,Y)+JT(X,JY))^h_J.\\[8pt]
(ii)\>{\mathcal V}{\mathcal N}_n(X^h,Y^h)_J=-R^D(X,Y)J+R^D(JX,JY)J\\[6pt]
\hspace{6cm}-{\mathcal J}_nR^D(JX,Y)J -{\mathcal J}_nR^D(X,JY)J.\\[8pt]
(iii)\>{\mathcal N}_n(X^h_J,V)=[(-1)^{n}+1](JVX)^h_J.\\[8pt]
(iv)\> {\mathcal N}_n(V,W)=0.
\end{array}
$$
\end{lemma}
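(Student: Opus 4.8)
The plan is to compute the Nijenhuis tensor of ${\mathcal J}_n$ by evaluating it on the three pairs of vector types---horizontal-horizontal, horizontal-vertical, and vertical-vertical---using a convenient choice of local sections of the twistor space. For a point $J\in{\mathcal Z}_{+}$ with $p=\pi(J)$, I would fix an oriented orthonormal frame $(E_1,\dots,E_4)$ of $TM$ near $p$ that is $D$-parallel at $p$ (so $D_XE_i|_p=0$), giving the induced frame $(s_1^{+},s_2^{+},s_3^{+})$ of $\Lambda^2_{+}TM$ from (\ref{s-basis}), which is then also $D$-parallel at $p$. Horizontal lifts $X^h$ are computed via $X^h_\sigma=s_{\ast}X-D_Xs$ for a section $s$ through $\sigma$; vertical vector fields are extended as sections of the vertical bundle. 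The standard identities for Lie brackets of horizontal and vertical lifts in terms of the connection and its curvature (the O'Neill-type formulas) are the computational backbone: $[X^h,Y^h]$ has horizontal part $[X,Y]^h$ and vertical part $-R^D(X,Y)$ acting on the fibre point, while $[X^h,V]$ is vertical and equals $D_XV$ for a suitably parallel extension of $V$, and $[V,W]$ is vertical, being the bracket inside the fibre.

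For part $(iv)$, the key point is that the fibres of ${\mathcal Z}_{+}\to M$ are totally geodesic round $2$-spheres carrying the standard (K\"ahler, hence integrable) complex structure ${\mathcal J}_\sigma$; since ${\mathcal J}_n$ restricted to a fibre is either ${\mathcal J}_\sigma$ or $-{\mathcal J}_\sigma$ and brackets of vertical fields stay vertical, ${\mathcal N}_n(V,W)$ reduces to the Nijenhuis tensor of an integrable structure on the $2$-sphere, which vanishes. For part $(iii)$, I would compute ${\mathcal N}_n(X^h,V)$ using that ${\mathcal J}_nX^h=(JX)^h$ and ${\mathcal J}_nV=(-1)^{n+1}JV$; all brackets involved are vertical except possibly $[X^h,V]$, and one needs to track how $J$ acts on $V\in{\mathcal V}_J\subset\Lambda^2_{+}T_pM$ and how $\pi_{\ast}$ kills the relevant terms, producing the factor $[(-1)^{n}+1]$ which vanishes for $n=1$ and equals $2$ for $n=2$; the expression $(JVX)^h$ refers to applying the endomorphism $K_J\circ K_V$ (under the convention identifying $2$-vectors with skew endomorphisms) to $X$. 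For parts $(i)$ and $(ii)$, I would substitute the bracket formulas into the definition of ${\mathcal N}_n$, collect the horizontal contributions (which involve only the torsion $T$, since the connection terms vanish at $p$ by parallelism, giving $(i)$) and the vertical contributions (which involve the curvature operator $R^D$ evaluated at $J$, giving $(ii)$), using that ${\mathcal H}{\mathcal J}_n=J$-lift and ${\mathcal V}{\mathcal J}_n=(-1)^{n+1}{\mathcal J}_J$.

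The main obstacle I expect is the bookkeeping in $(ii)$ and $(iii)$: one must be careful that the vertical part of ${\mathcal J}_n$ is $(-1)^{n+1}$ times the fibre complex structure ${\mathcal J}_J$, whereas on horizontal vectors ${\mathcal J}_n$ is simply the lift of $K_J$ regardless of $n$, so the four terms in the Nijenhuis tensor mix these two behaviors. In particular one needs the identities from the Preliminaries---$K_a\circ K_b=-g(a,b)\,Id\pm K_{a\times b}$ and ${\mathcal J}_J V=J\times V$ (with the $+$ sign on ${\mathcal Z}_{+}$)---to simplify the vertical outputs, and one must verify that in $(ii)$ the curvature terms genuinely combine without residual torsion contributions (the torsion enters only through $R^D$ itself and through the horizontal part in $(i)$). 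Since the torsion is assumed skew-symmetric one might worry about extra terms in the bracket formulas, but in fact the horizontal lift is defined purely via the connection $D$ on $\Lambda^2_{+}TM$, so the bracket identities take their usual form and the torsion surfaces only as written. This is a standard but intricate computation, and the only genuinely delicate point is sign-tracking through the $(-1)^{n+1}$ factors and the vector-cross-product identities on the fibre.
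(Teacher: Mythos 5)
The paper itself does not prove this lemma: it is quoted from the cited references \cite{BBO,DM89,D-V,G,OR}, and your plan is essentially the standard direct computation carried out there (a frame $D$-parallel at $p$, the bracket formulas for horizontal lifts and vertical fields, the torsion surviving in the horizontal part to give $(i)$, and fibrewise integrability of the round-sphere complex structure to give $(iv)$). So the overall route is the right one; but two concrete points in your sketch would derail the computation as written.

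First, in $(iii)$ your vertical/horizontal bookkeeping is backwards. The bracket $[X^h,V]$ is automatically \emph{vertical}: $X^h$ is $\pi$-related to the vector field $X$ and a vertical field is $\pi$-related to $0$. The brackets that acquire horizontal components are the ones involving ${\mathcal J}_nX^h$, because ${\mathcal J}_nX^h$ is not the lift of a fixed vector field on $M$ --- its projection $\pi_{\ast}({\mathcal J}_nX^h)_{\sigma}=K_{\sigma}X$ varies along the fibre, so differentiating along a vertical direction produces terms of the form $(K_VX)^h$. It is precisely ${\mathcal H}[{\mathcal J}_nX^h,{\mathcal J}_nV]$ together with ${\mathcal J}_n{\mathcal H}[{\mathcal J}_nX^h,V]$ that combine, via ${\mathcal J}_nV=(-1)^{n+1}J\times V$ and $K_JK_V$, into the factor $[(-1)^n+1](JVX)^h_J$. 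With your reading (only $[X^h,V]$ possibly non-vertical) the horizontal part of ${\mathcal N}_n(X^h,V)$ would come out zero for both $n$, contradicting $(iii)$ for $n=2$ and the corollary that ${\mathcal J}_2$ is never integrable. Second, mind the curvature convention: the paper defines $R^D(X,Y)=D_{[X,Y]}-[D_X,D_Y]$, so the vertical part of $[X^h,Y^h]_{\sigma}$ is $+R^D(X,Y)\sigma$ in this convention, not $-R^D(X,Y)\sigma$ as you state; propagating your sign through the four brackets yields $(ii)$ with the opposite overall sign. With these two corrections the remainder of your outline (connection terms killed at $p$ by parallelism so that only $T$ survives in $(i)$, curvature terms assembling into $(ii)$, and $(iv)$ from the integrable fibre structure) goes through and reproduces the lemma.
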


\begin{cor}
The almost complex structure ${\mathcal J}_2$ is never integrable.
\end{cor}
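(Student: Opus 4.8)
The plan is to exploit part $(iii)$ of Lemma~\ref{Nijen} for $n=2$. Setting $n=2$ gives $[(-1)^{n}+1]=[(-1)^2+1]=2\neq 0$, so the formula reads ${\mathcal N}_2(X^h_J,V)=2(JVX)^h_J$ for every $J\in{\mathcal Z}_{+}$, every $V\in{\mathcal V}_J$, and every $X\in T_{\pi(J)}M$. Thus, to show ${\mathcal J}_2$ is not integrable it suffices to exhibit, at some point $J\in{\mathcal Z}_{+}$, a vertical vector $V\in{\mathcal V}_J$ and a tangent vector $X\in T_{\pi(J)}M$ such that $JVX\neq 0$, because then ${\mathcal N}_2(X^h_J,V)$ is a nonzero horizontal vector and hence ${\mathcal J}_2$ fails to be integrable by the Newlander–Nirenberg theorem.

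Next I would produce such $V$ and $X$. Recall $V$ lies in the fibre direction, i.e.\ (under the identification of $\Lambda^2_{+}TM$ with skew-symmetric endomorphisms) $V$ is itself a $g$-skew-symmetric endomorphism of $T_{\pi(J)}M$ orthogonal to $J$; in particular $V$ is nonzero and, being skew with respect to a positive definite metric in dimension four, is invertible on a subspace of dimension at least two, so it is certainly not the zero endomorphism. Pick any $X\in T_{\pi(J)}M$ with $VX\neq 0$. Since $J=K_\sigma$ is a complex structure on $T_{\pi(J)}M$, it is invertible, so $J(VX)\neq 0$. Hence $JVX\neq 0$, and by the displayed formula ${\mathcal N}_2(X^h_J,V)=2(JVX)^h_J\neq 0$.

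Therefore the Nijenhuis tensor of ${\mathcal J}_2$ does not vanish at any point of ${\mathcal Z}_{+}$, and ${\mathcal J}_2$ is never integrable. The same computation applies verbatim on ${\mathcal Z}_{-}$ (only the sign in ${\mathcal J}_{\sigma}V=\pm(\sigma\times V)$ changes, which does not affect the vanishing of the factor $[(-1)^n+1]$ for $n=2$ in part $(iii)$), so ${\mathcal J}_2$ is never integrable on either component of ${\mathcal Z}$. I do not anticipate any real obstacle here: the entire content is already packaged in Lemma~\ref{Nijen}$(iii)$, and the only thing to verify is the elementary algebraic fact that one can choose $V$ and $X$ making $JVX$ nonzero, which is immediate since both $J$ and a suitable restriction of $V$ are injective. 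The one point to state carefully is that $\dim M=4$ (hence $\dim{\mathcal V}_J=2$) guarantees vertical directions exist at all, so the argument is non-vacuous.
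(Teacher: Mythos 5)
Your proof is correct and follows essentially the same route as the paper: both rest entirely on Lemma~\ref{Nijen}(iii) with $n=2$, where the factor $(-1)^n+1=2$ does not vanish. The only difference is that the paper exhibits an explicit choice ($J=E_1\wedge E_2+E_3\wedge E_4$, $V=E_1\wedge E_3+E_4\wedge E_2$, $X=E_1$, giving ${\mathcal N}_2(E_1^h,V)_J=2(E_4)^h_J\neq 0$), whereas you argue abstractly that a nonzero vertical $V$ and an $X$ with $JVX\neq 0$ always exist; just note that the nonvanishing of $V$ is a choice (the fibre being $2$-dimensional), not a consequence of $V$ being vertical.
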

\begin{proof}
If $(E_1,...,E_4)$ is an oriented orthonormal frame, set
$J=E_1\wedge E_2+E_3\wedge E_4$, $V=E_1\wedge E_3+E_4\wedge E_2$.
Then ${\mathcal V}{\mathcal N}_2(E_1^h,V)_J=2(E_4)^h_J\neq 0$.
\end{proof}

\section{Integrability of the Atiyah-Hitchin-Singer almost complex structure on the twistor spaces of a Riemannian $4$-manifold with
skew-symmetric torsion}

Suppose that the metric connection $D$ has (totally) skew-symmetric
torsion, i.e., the trilinear form
$$
{\mathcal T}(X,Y,Z)=g(T(X,Y),Z), \quad X,Y,Z\in TM,
$$
is skew-symmetric. Let $\nabla$ be the Levi-Civita connection of
$(M,g)$. Then
$$
D_{X}Y=\nabla_{X}Y+\frac{1}{2}T(X,Y).
$$
The difference between  the connections $D$ and $\nabla$ satisfies
the following identity.

\begin{lemma}\label{horN}
If the torsion $T$ is skew-symmetric, then for every $J\in{\mathcal
Z}_{+}$ and every $X,Y\in T_{\pi(J)}M$,
$$
T(X,Y)-T(JX,JY)+JT(JX,Y)+JT(X,JY)=0
$$
\end{lemma}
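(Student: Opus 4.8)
The plan is to work at a single point $p=\pi(J)$ and use the fact that $J$, viewed as a complex structure $K_J$ on $T_pM$, is orthogonal and compatible with the orientation. The key structural fact is that the expression $\Phi(X,Y):=T(X,Y)-T(JX,JY)+JT(JX,Y)+JT(X,JY)$ is, for each fixed $J$, the $(0,2)+(2,0)$-part (up to a factor) of the $T_pM$-valued $2$-form $T$ with respect to the complex structure $J$; equivalently, $\Phi$ vanishes identically if and only if $\imath_V T$ is $J$-linear for the vector-valued form $T$. So the strategy is to exploit the skew-symmetry of $\mathcal{T}$ to show this $(0,2)+(2,0)$-component is forced to be zero in dimension four.

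First I would fix an oriented orthonormal frame $(E_1,E_2,E_3,E_4)$ of $T_pM$ adapted to $J$, i.e., chosen so that $K_J E_1 = E_2$, $K_J E_3 = E_4$ (such a frame exists since $J\in\mathcal{Z}_+$, using the correspondence $\sigma\to K_\sigma$ and the basis \eqref{s-basis}). Then I would expand $T$ in this frame: writing $T(E_i,E_j)=\sum_k \mathcal{T}_{ijk}E_k$ with $\mathcal{T}_{ijk}=\mathcal{T}(E_i,E_j,E_k)$ totally skew-symmetric, the four-dimensionality means $\mathcal{T}$ is (up to scale) the Hodge dual of a $1$-form, but more simply it has only four independent components $\mathcal{T}_{123},\mathcal{T}_{124},\mathcal{T}_{134},\mathcal{T}_{234}$. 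The plan is then a direct computation: evaluate $\Phi(E_i,E_j)$ for the relevant pairs $(E_i,E_j)$, apply $J$ to the vector values using $JE_1=E_2$, $JE_2=-E_1$, $JE_3=E_4$, $JE_4=-E_3$, and collect terms. By skew-symmetry of $\Phi$ in $X,Y$ and its obvious $J$-equivariance ($\Phi(JX,Y)=\Phi(X,JY)=-J\Phi(X,Y)$, which follows from $J^2=-\mathrm{Id}$), it suffices to check $\Phi(E_1,E_3)$ and $\Phi(E_1,E_4)$ (the other pairs reduce to these or vanish trivially). In each case the total skew-symmetry of $\mathcal{T}$ makes the four terms cancel in pairs.

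Alternatively — and this is the cleaner route I would actually present — I would argue invariantly. Note that for any vectors $X,Y$ and any unit vector $Z$, skew-symmetry gives $g(T(X,Y),Z) = -g(T(X,Z),Y) = g(T(Z,X),Y)$, so $g(T(X,Y),Z)$ is a $3$-form. In dimension four a $3$-form is the interior product $\imath_\xi \mathrm{vol}$ of the volume form with a unique vector field $\xi$; hence $T(X,Y) = \rho(\ast(X\wedge Y\wedge \xi^\flat))$ type formula — concretely $g(T(X,Y),Z)=\mathrm{vol}(X,Y,Z,\xi)$ for a fixed $\xi\in T_pM$ determined by $\mathcal{T}$ at $p$. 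Then $T(X,Y)-T(JX,JY)+JT(JX,Y)+JT(X,JY)$ translates into a statement about the $4$-form $\mathrm{vol}$ and the complex structure $J$: since $J$ is orientation-compatible and orthogonal, $\mathrm{vol}$ is $J$-invariant of type $(2,2)$, which kills exactly the combination appearing in $\Phi$. I expect the main obstacle to be bookkeeping the signs in the $J$-action on the last two terms and making the "volume form is of type $(2,2)$" argument precise rather than just heuristic; the safest fallback is the explicit frame computation of the previous paragraph, which is short once the frame is adapted to $J$.

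Finally I would remark that the same identity then holds on $\mathcal{Z}_-$ by reversing orientation, and note the immediate consequence combined with Lemma~\ref{Nijen}(i): for a connection with skew-symmetric torsion the horizontal-horizontal component of the Nijenhuis tensor of $\mathcal{J}_1$ reduces to the curvature term in Lemma~\ref{Nijen}(ii)-shape, i.e., the torsion drops out of the integrability obstruction of the Atiyah-Hitchin-Singer structure entirely — which is the point flagged in the introduction.
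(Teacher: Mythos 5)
Your proposal is correct, and your ``fallback'' frame computation is precisely the paper's proof: the paper takes an oriented orthonormal basis with $E_2=JE_1$, $E_4=JE_3$ and verifies the equivalent scalar identity
${\mathcal T}(E_i,E_j,E_k)-{\mathcal T}(JE_i,JE_j,E_k)-{\mathcal T}(JE_i,E_j,JE_k)-{\mathcal T}(E_i,JE_j,JE_k)=0$
directly for all $i,j,k$; your reduction via the conjugate-linearity $\Phi(JX,Y)=\Phi(X,JY)=-J\Phi(X,Y)$ is a small economy (in fact $\Phi(E_1,E_4)=-J\Phi(E_1,E_3)$, so a single pair suffices), but it is the same argument. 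Your preferred invariant route is a genuinely different packaging and somewhat more illuminating: pairing $\Phi(X,Y)$ with $Z$ and using that $J$ is $g$-orthogonal turns the claim into the vanishing of ${\mathcal T}(X,Y,Z)-{\mathcal T}(JX,JY,Z)-{\mathcal T}(JX,Y,JZ)-{\mathcal T}(X,JY,JZ)$, which is four times the $(3,0)+(0,3)$-component of the $3$-form ${\mathcal T}$ relative to $J$, and this vanishes identically because $\Lambda^{3,0}=\Lambda^{0,3}=0$ in complex dimension two. Your formulation through ${\mathcal T}=\imath_{\xi}\mathrm{vol}$ with $\mathrm{vol}$ of type $(2,2)$ amounts to the same thing, but the one step you should spell out (rather than leave heuristic) is that contracting a $(2,2)$-form with $\xi=\xi^{1,0}+\xi^{0,1}$ produces only $(2,1)+(1,2)$ terms, whose contribution to the displayed combination is zero. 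What this version buys is exactly what you note: the identity uses only $\dim M=4$, skew-symmetry of ${\mathcal T}$, and $g$-orthogonality of $J$ --- not the orientation compatibility --- so it holds verbatim on ${\mathcal Z}_{-}$, and it explains conceptually why the torsion drops out of the horizontal part of the Nijenhuis tensor in Lemma~\ref{Nijen}(i).
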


\begin{proof}
Take an oriented orthonormal basis $(E_1,...,E_4)$ of $T_{\pi(J)}M$
such that $E_2=JE_1$, $E_4=JE_3$. Then it is easy to check that
$$
{\mathcal T}(E_i,E_j,E_k)-{\mathcal T}(JE_i,JE_j,E_k)-{\mathcal
T}(JE_i,E_j,JE_k)-{\mathcal T}(E_i,JE_j,JE_k)=0
$$
for every $i,j,k=1,...,4$.
\end{proof}

Now, it follows from \cite[Lemma 2.4]{D-V} that the connections $D$
and $\nabla$ define the same almost complex structure on the twistor
space. Thus, by the Atiyah-Hitchin-Singer  theorem \cite{AHS}, we
have:

\begin{theorem}\label{Z+}
Let $(M,g)$ be an oriented Riemannian $4$-manifold endowed with a
metric connection with skew-symmetric torsion. The
Atiyah-Hitchin-Singer almost complex structure ${\mathcal J}_1$ on
the twistor space ${\mathcal Z}_{+}$  is integrable if and only if
the metric $g$ is anti-self-dual.
\end{theorem}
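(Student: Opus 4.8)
The plan is to reduce the statement to the classical Atiyah-Hitchin-Singer theorem by showing that the almost complex structure ${\mathcal J}_1$ on ${\mathcal Z}_{+}$ built from the connection with skew-symmetric torsion $D$ coincides with the one built from the Levi-Civita connection $\nabla$. The tool for this is the cited \cite[Lemma 2.4]{D-V}, which (in its general form) expresses how the almost complex structures defined by two different metric connections on ${\mathcal Z}_{+}$ are related through the difference tensor of those connections. Since $D_XY=\nabla_XY+\tfrac12 T(X,Y)$, the difference tensor is governed by $T$, and the vertical component of the discrepancy between the two structures is controlled precisely by the expression $T(X,Y)-T(JX,JY)+JT(JX,Y)+JT(X,JY)$ that appears in Lemma \ref{horN}.

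First I would recall that changing the metric connection does not change the vertical distribution (it is intrinsic to the fibres), nor does it change the action of ${\mathcal J}_1$ on vertical vectors, since that action is just $\pm$ the vector cross product in the fibre. Second, the horizontal distribution does change, but the two horizontal lifts $X^h$ (w.r.t.\ $D$) and $X^{h,\nabla}$ (w.r.t.\ $\nabla$) of a given $X\in T_{\pi(J)}M$ differ by a vertical vector determined by $(D-\nabla)_X$ acting on $\sigma$, and by construction both structures send the horizontal lift of $X$ to the horizontal lift of $JX$; the only place a genuine difference can enter is through the vertical shift. The content of \cite[Lemma 2.4]{D-V} is that this vertical shift, when measured against ${\mathcal J}_1$, is exactly the quantity $T(X,Y)-T(JX,JY)+JT(JX,Y)+JT(X,JY)$ (identified with a vertical vector via the isomorphism $\Lambda^2TM\cong A(TM)$). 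Third, Lemma \ref{horN} shows this quantity vanishes identically on ${\mathcal Z}_{+}$ when $T$ is skew-symmetric, so the two almost complex structures agree. Finally, applying the Atiyah-Hitchin-Singer theorem to the Levi-Civita connection gives integrability of ${\mathcal J}_1$ on ${\mathcal Z}_{+}$ if and only if $(M,g)$ is anti-self-dual, which is the assertion.

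The main obstacle, and the step deserving the most care, is the precise invocation of \cite[Lemma 2.4]{D-V}: one must check that its hypotheses match the present setup and that the vanishing of the symmetrized torsion expression on $2$-vectors of the form $J\in{\mathcal Z}_{+}$ (rather than on all of $\Lambda^2TM$) is what the lemma actually requires. Here Lemma \ref{horN} is stated exactly for $J\in{\mathcal Z}_{+}$ and $X,Y\in T_{\pi(J)}M$, so the fit should be clean; the proof of Lemma \ref{horN} itself is the short frame computation using an adapted basis $E_2=JE_1,\ E_4=JE_3$, and reduces to verifying that each term ${\mathcal T}(E_i,E_j,E_k)-{\mathcal T}(JE_i,JE_j,E_k)-{\mathcal T}(JE_i,E_j,JE_k)-{\mathcal T}(E_i,JE_j,JE_k)$ vanishes, which follows from total skew-symmetry of ${\mathcal T}$ together with $J^2=-\mathrm{Id}$. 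No analysis of curvature terms is needed at all, since the horizontal Nijenhuis curvature contribution is the same for $D$ and $\nabla$ once the structures are shown to coincide; thus the whole argument is formal modulo the two cited results.
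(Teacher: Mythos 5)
Your proposal is correct and follows essentially the same route as the paper: establish the vanishing of $T(X,Y)-T(JX,JY)+JT(JX,Y)+JT(X,JY)$ on ${\mathcal Z}_{+}$ (Lemma~\ref{horN}), invoke \cite[Lemma 2.4]{D-V} to conclude that $D$ and $\nabla$ induce the same almost complex structure ${\mathcal J}_1$, and then apply the Atiyah--Hitchin--Singer theorem. Your additional explanation of why only the vertical shift of horizontal lifts can cause a discrepancy, and that its vanishing is exactly the displayed torsion identity, is accurate and consistent with the paper's use of the cited lemma.
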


\smallskip

\noindent {\bf Remark}. This result can also be proved  by means of
Lemmas~\ref{Nijen} and \ref{horN}, and the following relation
between the curvature tensors $R^D$ and $R^{\nabla}$  of the
connections $D$ and $\nabla$:
\begin{equation}\label{curv-D}
\begin{array}{c}
g(R^D(X,Y)Z,U)=g(R^{\nabla}(X,Y)Z,U)\\[6pt]
-\displaystyle{\frac{1}{2}}\big[(\nabla_{X}{\mathcal T})(Y,Z,U)-(\nabla_{Y}{\mathcal T})(X,Z,U)\big]\\[6pt]
+\displaystyle{\frac{1}{4}}\sum\limits_{i=1}^4\big[{\mathcal
T}(X,U,E_i){\mathcal T}(Y,Z,E_i)-{\mathcal T}(X,Z,E_i){\mathcal
T}(Y,U,E_i)\big],
\end{array}
\end{equation}
where $X,Y,Z,U\in T_pM$ and $\{E_1,...,E_4\}$ is an orthonormal
basis of $T_pM$.

\section{Integrability of natural almost complex structures on a product twistor
space}

 Every metric connection $D$ on $(M,g)$ induces a metric connection
(again denoted by $D$) on the product bundle $\pi:\Lambda^2TM\times
\Lambda^2TM\to M$, which preserves the four subbundles
$\Lambda^2_{\pm}TM\times \Lambda^2_{\pm}TM$. If ${\mathcal Z}$ is
the twistor space of $(M,g)$, we can define four  almost complex
structures $\mathscr{J}^m$ on the product bundle ${\mathcal Z}\times
{\mathcal Z}\to M$ (the generalized twistor space) as follows. If
$J=(J_1,J_2)\in {\mathcal Z}\times {\mathcal Z}$, the horizontal
space ${\mathcal H}_J$ at $J$ of the vector bundle
$\Lambda^2TM\times \Lambda^2TM$ with respect to the connection $D$
is tangent to ${\mathcal Z}\times {\mathcal Z}$, and we set
$\mathscr{J}^mX^h_J=(J_1X)^h_J$ for every $X\in T_{\pi(J)}M$ and
$m=1,...,4$. There are four natural complex structures on the
vertical space ${\mathcal V}_J={\mathcal V}_{J_1}\times{\mathcal
V}_{J_2}$ of ${\mathcal Z}\times {\mathcal Z}$ defined for
$V=(V_1,V_2)$ by $K_1V=(J_1V_1,J_2V_2)$, $K_2V=(J_1V_1,-J_2V_2)$,
$K_3V=-K_2V$, $K_4V=-K_1V$. We set $\mathscr{J}^mV=K_mV$,
$m=1,...,4$.

\smallskip

\noindent {\bf Remark}. In order to define a complex structure on
the horizontal spaces ${\mathcal H}_J$, $J=(J_1,J_2)$, we can use
the structure $J_2$ instead of $J_1$. Then we get four more almost
complex structures on ${\mathcal Z}\times {\mathcal Z}$. But, by
symmetry, they do not differ essentially from the structures
$\mathscr{J}^m$, $m=1,...,4$.

\smallskip

In this section, we shall find the integrability conditions for the
restrictions of $\mathscr{J}^m$ to the four connected components of
${\mathcal Z}\times {\mathcal Z}$, the subbundles ${\mathcal
Z}_{\pm}\times {\mathcal Z}_{\pm}$, under the assumption that {\it
the torsion of $D$ is skew-symmetric}.

Denote by ${\mathscr N}_m$ the Nijenhuis tensor of the almost
complex structure $\mathscr{J}^m$. Then we have the following analog
of Lemma~\ref{Nijen} (with a similar proof).

\begin{prop}\label{Ni}
Let $J=(J_1,J_2)\in{\mathcal Z}\times{\mathcal Z}$,
$V=(V_1,V_2),W\in{\mathcal V}_J$, and let $X,Y\in T_{\pi(J)}M$. Then
$$
\begin{array}{l}
(i)\>{\mathcal H}{\mathscr
N}_m(X^h,Y^h)_J=(T(X,Y)-T(J_1X,J_1Y)+J_1T(J_1X,Y)+J_1T(X,J_1Y))^h_J.\\[8pt]
(ii)\>{\mathcal V}{\mathscr N}_m(X^h,Y^h)_J=-R^D(X,Y)J+R^D(J_1X,J_1Y)J\\[6pt]
\hspace{6cm}-K_m(R^D(J_1X,Y)J +R^D(X,J_1Y)J).\\[8pt]
(iii)\>{\mathscr N}_m(X^h_J,V)=0\quad for \quad
m=1,2.\\[6pt] {\mathscr
N}_m(X^h_J,V)=2(J_1V_1X)^h_J \quad for \quad
m=3,4.\\[8pt]
(iv)\> {\mathscr N}_m(V,W)=0.
\end{array}
$$
\end{prop}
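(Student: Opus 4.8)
The plan is to compute the Nijenhuis tensor $\mathscr{N}_m$ of $\mathscr{J}^m$ by evaluating it on pairs of vector fields of three types --- horizontal lifts $X^h,Y^h$ of vector fields $X,Y$ on $M$, and vertical vector fields $V,W$ --- exactly as in the proof of Lemma~\ref{Nijen}. The key structural facts to exploit are that $\mathscr{J}^m$ acts on horizontal vectors by $\mathscr{J}^mX^h_J=(J_1X)^h_J$ (using only the first factor $J_1$), and on vertical vectors by the fibre-product endomorphism $K_m$; moreover the horizontal distribution $\mathcal{H}$ comes from the metric connection $D$ on $\Lambda^2TM\times\Lambda^2TM$, so that the familiar bracket formulas hold: $\mathcal{V}[X^h,Y^h]_J=-R^D(X,Y)J$, $\mathcal{H}[X^h,Y^h]_J=[X,Y]^h_J$, while $[X^h,V]$ is vertical when $V$ is a vertical field obtained by extending a section of the appropriate bundle $\mathcal{H}$-parallel along fibres, and $[V,W]$ is again vertical and tangent to the fibre.

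For part (i), I would take horizontal and vertical components of $\mathscr{N}_m(X^h,Y^h)$. The horizontal part only sees the $\mathcal{H}$-brackets $[X^h,Y^h]$, $[(J_1X)^h,(J_1Y)^h]$, etc., whose horizontal components are $[X,Y]^h$, $[J_1X,J_1Y]^h$, and so on; since the torsion of $D$ enters via $\mathcal{H}[X^h,Y^h]_J=[X,Y]^h_J$ but the connection $D$ has torsion $T$, rewriting $[X,Y]=D_XY-D_YX-T(X,Y)$ and collecting terms yields precisely $(T(X,Y)-T(J_1X,J_1Y)+J_1T(J_1X,Y)+J_1T(X,J_1Y))^h_J$ --- the $D$-covariant-derivative terms cancel because $D$ preserves the bundle and $DJ_1$ contributes only vertically. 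This is verbatim the four-manifold twistor computation with $J$ replaced by $J_1$. For the vertical part (ii), one uses $\mathcal{V}[X^h,Y^h]_J=-R^D(X,Y)J$ together with $\mathscr{J}^mX^h=(J_1X)^h$ and $\mathscr{J}^m$ acting as $K_m$ on the vertical outputs; the four bracket terms assemble into $-R^D(X,Y)J+R^D(J_1X,J_1Y)J-K_m(R^D(J_1X,Y)J+R^D(X,J_1Y)J)$, again exactly the analog of Lemma~\ref{Nijen}(ii) with $\mathcal{J}_n$ replaced by $K_m$ and $J$ by $J_1$.

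For (iii) I would extend $V=(V_1,V_2)$ to a vertical field and compute $\mathscr{N}_m(X^h,V)=-[X^h,V]+[(J_1X)^h,K_mV]-\mathscr{J}^m[(J_1X)^h,V]-\mathscr{J}^m[X^h,K_mV]$. Here $[X^h,V]$ and $[X^h,K_mV]$ are vertical, and the subtlety is how $K_m$ interacts with horizontal transport: differentiating $K_m$ (built from $J_1,J_2$) along $X^h$ produces terms involving $J_1V_1X$ and $J_2V_2X$ type expressions acting vertically, which must be pushed through carefully. The point is that for $m=1,2$ the sign pattern of $K_m$ on the two factors is $(+,+)$ or $(+,-)$, and when combined with $\mathscr{J}^m$ acting as $(J_1)$ on the horizontal and $K_m$ on the vertical the contributions cancel, whereas for $m=3,4$, where $K_mV=(-J_1V_1,\mp J_2V_2)$ flips the first-factor sign, one extra horizontal term $2(J_1V_1X)^h_J$ survives --- this is the genuine asymmetry, mirroring Lemma~\ref{Nijen}(iii) where the factor is $[(-1)^n+1]$. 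Part (iv) is immediate: on the vertical distribution $\mathscr{J}^m$ is the (integrable) product complex structure $K_m$ on the fibre $\mathcal{V}_{J_1}\times\mathcal{V}_{J_2}$, a product of standard sphere complex structures, so $\mathscr{N}_m(V,W)=0$. I expect the main obstacle to be part (iii): correctly tracking how horizontal differentiation hits the two-component endomorphism $K_m$ and verifying that the first-factor and second-factor contributions do or do not cancel according to the sign pattern of $m$; everything else is a routine transcription of the one-twistor-space computation with $J\rightsquigarrow J_1$ and $\mathcal{J}_n\rightsquigarrow K_m$.
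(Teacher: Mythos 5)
Your plan is essentially the paper's own proof: the paper only remarks that Proposition~\ref{Ni} is the analog of Lemma~\ref{Nijen} ``with a similar proof'', i.e.\ the standard twistor bracket computation with $J$ replaced by $J_1$ on horizontal lifts and ${\mathcal J}_n$ replaced by $K_m$ on vertical vectors, which is exactly what you carry out, and your identification of the sign of the first component of $K_m$ as the source of the dichotomy in (iii) is the correct mechanism. One small precision: the surviving term $2(J_1V_1X)^h_J$ arises from the vertical derivative of the fibre-dependent horizontal field $J'\mapsto (J_1'X)^h_{J'}=\mathscr{J}^mX^h$ (set against $\mathscr{J}^m$ applied to the horizontal part of $[\mathscr{J}^mX^h,V]$), not from differentiating $K_m$ along $X^h$ --- those derivatives give no contribution because $D$ is metric, so the fibre structures are parallel in horizontal directions.
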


\begin{cor}
The restrictions of the almost complex structures $\mathscr{J}^m$, $m=3,4$, to the connected components of ${\mathcal
Z}\times {\mathcal Z}$  are not integrable.
\end{cor}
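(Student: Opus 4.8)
The plan is to deduce the statement immediately from part (iii) of Proposition~\ref{Ni}. Recall that an almost complex structure fails to be integrable as soon as its Nijenhuis tensor is nonzero at a single point. Since by Proposition~\ref{Ni}(iii) we have $\mathscr{N}_m(X^h_J,V)=2(J_1V_1X)^h_J$ for $m=3,4$, it suffices to produce, in each of the four connected components ${\mathcal Z}_\varepsilon\times {\mathcal Z}_\delta$ with $\varepsilon,\delta\in\{+,-\}$, a point $J=(J_1,J_2)$, a vertical vector $V=(V_1,V_2)\in{\mathcal V}_J$, and a tangent vector $X\in T_{\pi(J)}M$ with $J_1V_1X\neq 0$; since the horizontal lift $(\,\cdot\,)^h_J$ is injective, it then follows that $\mathscr{N}_m(X^h_J,V)_J=2(J_1V_1X)^h_J\neq 0$, so $\mathscr{J}^m$ is not integrable on ${\mathcal Z}_\varepsilon\times {\mathcal Z}_\delta$.

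First I would fix a point $p\in M$ and an oriented orthonormal frame $(E_1,\dots,E_4)$ of $T_pM$. In the component ${\mathcal Z}_\varepsilon\times {\mathcal Z}_\delta$, I would take $J_1=s_1^\varepsilon=E_1\wedge E_2+\varepsilon E_3\wedge E_4\in ({\mathcal Z}_\varepsilon)_p$ and $V_1=s_2^\varepsilon=E_1\wedge E_3+\varepsilon E_4\wedge E_2\in\Lambda^2_\varepsilon T_pM$, together with an arbitrary $J_2\in({\mathcal Z}_\delta)_p$ and $V=(V_1,0)$. Since $s_1^\varepsilon$ and $s_2^\varepsilon$ are mutually orthogonal unit vectors of $\Lambda^2_\varepsilon T_pM$, the element $V_1$ is orthogonal to ${\Bbb R}J_1$, hence $V_1\in{\mathcal V}_{J_1}$ and therefore $V\in{\mathcal V}_J$.

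The remaining step is the short computation, via (\ref{cs}), that $K_{V_1}E_1=E_3$ and $K_{J_1}E_3=\varepsilon E_4$, so that $J_1V_1E_1=\varepsilon E_4\neq 0$; this parallels the computation in the corollary following Lemma~\ref{Nijen}. I expect essentially no obstacle here: the only points to verify are that $V_1$ lies in the vertical space (the orthogonality of $s_1^\varepsilon$ and $s_2^\varepsilon$) and that $J_1V_1$ does not annihilate all of $T_pM$, the latter being automatic since $J_1$ is invertible and $V_1\neq 0$. In fact one can bypass even the explicit computation by simply picking any $0\neq V_1\in{\mathcal V}_{J_1}$ and any $X\in T_pM$ with $V_1X\neq 0$, which forces $J_1V_1X\neq 0$ and hence $\mathscr{N}_m(X^h_J,V)_J\neq 0$ for $m=3,4$.
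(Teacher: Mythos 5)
Your proposal is correct and is exactly the argument the paper intends: the corollary follows from Proposition~\ref{Ni}(iii) by exhibiting, in each component, a point with $J_1V_1X\neq 0$, precisely as in the proof of the corollary to Lemma~\ref{Nijen} (where $J=E_1\wedge E_2+E_3\wedge E_4$, $V=E_1\wedge E_3+E_4\wedge E_2$, $X=E_1$ give the value $2(E_4)^h_J$). Your choice $J_1=s_1^\varepsilon$, $V=(s_2^\varepsilon,0)$, $X=E_1$, yielding $2\varepsilon(E_4)^h_J\neq 0$, is a correct verification of the same kind, and the closing remark that any $V_1\neq 0$ with $V_1X\neq 0$ suffices is also valid.
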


\smallskip

In order to determine the integrability conditions for
$\mathscr{J}^1$  and $\mathscr{J}^2$, we need the following.

\smallskip

\noindent {\bf Notation}. The $1$-form $\ast{\mathcal T}$ will be
denoted by $\tau$.

Clearly, the form $\tau$ uniquely determines the $3$-form ${\mathcal
T}$, hence the connection $D$.

\medskip

For a given orthonormal frame $E_1,...,E_4$, it is convenient to set
$$
E_{ijk}=E_i\wedge E_j\wedge E_k, \quad {\mathcal T}_{ijk}={\mathcal
T}(E_{ijk}).
$$
Under this notation, we have
\begin{equation}\label{T-tau}
\begin{array}{c}
{\mathcal T}={\mathcal T}_{123}E_{123}+{\mathcal
T}_{124}E_{124}+{\mathcal T}_{134}E_{134}+{\mathcal
T}_{234}E_{234}\\[8pt]
\tau=-{\mathcal T}_{234}E_1+{\mathcal T}_{134}E_2-{\mathcal
T}_{124}E_3+{\mathcal T}_{123}E_4.
\end{array}
\end{equation}
It follows that
\begin{equation}\label{nabla-T-tau}
\begin{array}{c}
(\nabla_{X}{\mathcal T})(E_{123})=(\nabla_{X}\tau)(E_4),\quad
(\nabla_{X}{\mathcal T})(E_{124})=-(\nabla_{X}\tau)(E_3),\\[8pt]
(\nabla_{X}{\mathcal T})(E_{134})=(\nabla_{X}\tau)(E_2),\quad
(\nabla_{X}{\mathcal T})(E_{234})=-(\nabla_{X}\tau)(E_1).
\end{array}
\end{equation}

\bigskip
\begin{theorem}\label{Z++}
The restriction of the almost complex structure $\mathscr{J}^m$,
$m=1$ or $2$, to ${\mathcal Z}_{+}\times {\mathcal Z}_{+}$
(respectively, ${\mathcal Z}_{-}\times {\mathcal Z}_{-}$) is
integrable if and only if the metric $g$ is anti-self-dual (resp.
self-dual) with scalar curvature
$\displaystyle{s=\frac{3}{2}||\tau||^2+3\delta\tau}$ and the
$2$-form $d\tau$  is anti-self-dual (resp. self-dual).

\end{theorem}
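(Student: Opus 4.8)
The plan is to analyze the vanishing of the Nijenhuis tensor $\mathscr{N}_m$ ($m=1,2$) on ${\mathcal Z}_{+}\times{\mathcal Z}_{+}$ via Proposition~\ref{Ni}. By part (iv), $\mathscr{N}_m$ vanishes on pairs of vertical vectors automatically, and by part (iii) it vanishes on mixed horizontal-vertical pairs for $m=1,2$. So integrability is equivalent to the vanishing of $\mathscr{N}_m(X^h,Y^h)_J$ for all $J=(J_1,J_2)$ and all $X,Y\in T_{\pi(J)}M$, which splits into the horizontal part (i) and the vertical part (ii). For the horizontal part, Lemma~\ref{horN} (applied with $J$ replaced by $J_1$) shows that $T(X,Y)-T(J_1X,J_1Y)+J_1T(J_1X,Y)+J_1T(X,J_1Y)=0$ identically, so ${\mathcal H}\mathscr{N}_m(X^h,Y^h)_J=0$ always holds. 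Thus the entire integrability condition is carried by the vertical part (ii): $-R^D(X,Y)J+R^D(J_1X,J_1Y)J-K_m\big(R^D(J_1X,Y)J+R^D(X,J_1Y)J\big)=0$ for all $J_1,J_2\in{\mathcal Z}_{+}$ and all $X,Y$.

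The key step is to rewrite this in terms of the Levi-Civita curvature and the torsion using formula (\ref{curv-D}), and then to unwind the definition of $K_m$ on ${\mathcal V}_J={\mathcal V}_{J_1}\times{\mathcal V}_{J_2}$. Writing $J_a=K_{\sigma_a}$ for unit $\sigma_a\in\Lambda^2_{+}T_pM$, the condition becomes a statement about two endomorphisms of $\Lambda^2_{+}T_pM$: the $\sigma_1$-component records the behavior on ${\mathcal V}_{J_1}$ (where $K_1=K_2$ act the same, namely as ${\mathcal J}_{\sigma_1}$), and the $\sigma_2$-component records the behavior on ${\mathcal V}_{J_2}$ (where $K_1$ acts as $+{\mathcal J}_{\sigma_2}$ and $K_2$ as $-{\mathcal J}_{\sigma_2}$). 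The $\sigma_1$-part of (ii) is, after projecting, exactly the vertical Nijenhuis expression for the single-twistor-space structure ${\mathcal J}_1$ on ${\mathcal Z}_{+}$ built from the connection $D$; since Lemma~\ref{horN} lets us replace $D$ by $\nabla$ there, its vanishing is — by the Atiyah-Hitchin-Singer theorem in the form of Theorem~\ref{Z+} — equivalent to anti-self-duality of $g$. The genuinely new content is the $\sigma_2$-part: here the curvature $R^D$ gets projected onto $\Lambda^2_{+}$ in the $\sigma_2$-slot, and because ${\mathcal B}$ maps $\Lambda^2_{\pm}\to\Lambda^2_{\mp}$ while $\frac{s}{6}Id+{\mathcal W}_{+}$ preserves $\Lambda^2_{+}$, the decomposition (\ref{dec}) together with the torsion terms in (\ref{curv-D}) produces, after the dust settles, two independent requirements: the $\mathcal W_-$-free part forces the constant-curvature-type relation $s=\frac{3}{2}\|\tau\|^2+3\delta\tau$ (matching the scalar-curvature normalization that makes the relevant operator on $\Lambda^2_{+}$ a multiple of the identity), and the remaining $3$-form data forces $d\tau$ to be anti-self-dual. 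This last identification is where the notational setup around (\ref{T-tau}), (\ref{nabla-T-tau}), and the substitution $\tau=\ast{\mathcal T}$ does its job: the term $(\nabla_X{\mathcal T})(Y,Z,U)-(\nabla_Y{\mathcal T})(X,Z,U)$ in (\ref{curv-D}), symmetrized over the $J_2$-action, reassembles into the self-dual part of $d\tau$, while the quadratic torsion terms $\frac14\sum({\mathcal T}\otimes{\mathcal T})$ reassemble into $\|\tau\|^2$.

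The main obstacle is the bookkeeping in this last paragraph: one must carefully track how the four-index tensor $R^D(J_1X,Y)J+R^D(X,J_1Y)J$, with $J=(J_1,J_2)$ and the outer $K_m$, descends to a condition purely about $\Lambda^2_{+}$-valued operators, and then match the curvature-correction terms of (\ref{curv-D}) against the pieces $\frac{s}{6}Id$, ${\mathcal W}_{+}$, $d\tau$, and $\|\tau\|^2$. The cleanest route is to fix at a point $p$ an oriented orthonormal frame $(E_1,\dots,E_4)$ adapted to $\sigma_1$ (so $J_1E_1=E_2$, $J_1E_3=E_4$), expand everything in the $s_i^{+}$-basis of (\ref{s-basis}), use the product rules $K_a\circ K_b=-g(a,b)Id+K_{a\times b}$ for $a,b\in\Lambda^2_{+}$, and read off the coefficients; the coefficient of the identity gives the scalar-curvature relation and the remaining trace-free symmetric / antisymmetric parts give anti-self-duality of $g$ and of $d\tau$ respectively. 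For the case ${\mathcal Z}_{-}\times{\mathcal Z}_{-}$ one either repeats the computation with $\Lambda^2_{-}$ or, more economically, observes that reversing the orientation of $M$ swaps ${\mathcal Z}_{+}\leftrightarrow{\mathcal Z}_{-}$ and ${\mathcal W}_{+}\leftrightarrow{\mathcal W}_{-}$ and sends $\tau=\ast{\mathcal T}$ to $-\ast{\mathcal T}$ (so $\|\tau\|^2$, $\delta\tau$ and the self-/anti-self-duality type of $d\tau$ transform as needed), giving the stated symmetric conclusion.
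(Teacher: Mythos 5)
Your plan is correct and follows essentially the same route as the paper: reduction to the vertical part of the Nijenhuis tensor via Proposition~\ref{Ni} and Lemma~\ref{horN}, identification of the first-slot condition with the Atiyah--Hitchin--Singer condition (Theorem~\ref{Z+}, giving anti-self-duality), and analysis of the second-slot condition by expanding $g({\mathcal R}^D(s_i^{+}),s_j^{+})$ via (\ref{curv-D}), (\ref{dec}), (\ref{T-tau}), (\ref{nabla-T-tau}) so that the identity component yields $s=\frac{3}{2}\|\tau\|^2+3\delta\tau$, the skew part yields anti-self-duality of $d\tau$, and the trace-free symmetric part yields ${\mathcal W}_{+}=0$, with the ${\mathcal Z}_{-}\times{\mathcal Z}_{-}$ case handled by reversing orientation exactly as in the paper. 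The only step left implicit (which the paper does carry out, via the substitutions $(J_1,J_2)=(J,J)$ and $(J,-J)$ and suitable bases) is that the second-slot condition, as $J_2$ varies independently of $J_1$, is equivalent to the full vanishing $g({\mathcal R}^D(a),b)=0$ for all $a,b\in\Lambda^2_{+}TM$; your frame computation would supply this.
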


\begin{proof}
According to Proposition~\ref{Ni} and Lemma~\ref{horN},  the almost complex structure $\mathscr{J}^m$, $m=1$ or $2$, on
${\mathcal Z}_{+}\times {\mathcal Z}_{+}$  is integrable if and only for every $J=(J_1,J_2)\in {\mathcal Z}_{+}\times
{\mathcal Z}_{+}$ and every $X,Y,Z,U\in T_pM$, $p=\pi(J)$,
\begin{equation}\label{int-1}
\begin{array}{c}
g({\mathcal R}^D(X\wedge Y-J_1X\wedge J_1Y),Z\wedge U-J_iZ\wedge J_iU)\\[6pt]
\hspace{2cm}=\pm g({\mathcal R}^D(J_1X\wedge Y+X\wedge
J_1Y),J_iZ\wedge U+Z\wedge J_iU),\quad i=1,2,
\end{array}
\end{equation}
where the minus sign corresponds to the case $m=i=2$.
\medskip

By  Lemmas~\ref{Nijen} and \ref{horN}, the latter identity  for
$i=1$ is the integrability condition for the Atiyah-Hitchin-Singer
almost complex structure on ${\mathcal Z}_{+}$.

Now, we show that the identity (\ref{int-1}) with $i=2$ is
equivalent to
\begin{equation}\label{I}
g({\mathcal R}^D(a),b)=0 ~~ \mbox{for every}~~
a,b\in\Lambda^2_{+}TM.
\end{equation}
Suppose that
\begin{equation}\label{II ident}
\begin{array}{c}
g({\mathcal R}^D(X\wedge Y-J_1X\wedge J_1Y),Z\wedge U-J_2Z\wedge J_2U)\\[6pt]
\hspace{2cm}=(-1)^{m+1}g({\mathcal R}^D(J_1X\wedge Y+X\wedge
J_1Y),J_2Z\wedge U+Z\wedge J_2U)
\end{array}
\end{equation}
for every $(J_1,J_2)$ and every $X,Y,Z,U$, $m=1,2$. Let
$J\in{\mathcal Z}_{+}$ be a complex structure on a tangent space
$T_pM$. Applying (\ref{II ident}) with $(J_1,J_2)=(J,J)$ and
$(J_1,J_2)=(J,-J)$,  we see that in the both cases $m=1$ and $m=2$
\begin{equation}\label{II}
g({\mathcal R}^D(X\wedge Y-JX\wedge JY),Z\wedge U-JZ\wedge JU)=0
\end{equation}
for every $X,Y,Z,U\in T_pM$. Let $(E_1,...,E_4)$ be an oriented
orthonormal basis of $T_pM$, and define $s_i=s_i^{+}$ by means of
this basis. Setting  $J=s_1$, $(X,Y)=(E_1,E_3)$, $(Z,U)=(E_1,E_3)$
in (\ref{II}), we get the identity $g(R^D(s_2),s_2)=0$; for
$(Z,U)=(E_1,E_4)$, we obtain $g(R^D(s_2),s_3)=0$. Also, identity
(\ref{II}) with $J=s_2$, $(X,Y)=(E_1,E_3)$, and $(Z,U)=(E_1,E_2)$
implies $g(R^D(s_2),s_1)=0$. Thus,
\begin{equation}\label{RD-s_2}
g(R^D(s_2),s_1)=g(R^D(s_2),s_2)=g(R^D(s_2),s_3)=0.
\end{equation}
Applying the latter identities for the bases $(E_1,E_4,E_2,E_3)$ and
$(E_1,E_3,E_4,E_2)$, we see that
\begin{equation}\label{RD-s1,s3}
\begin{array}{c}
g(R^D(s_1),s_1)=g(R^D(s_1),s_2)=g(R^D(s_1),s_3)=0,\\[6pt]
g(R^D(s_3),s_1)=g(R^D(s_3),s_2)=g(R^D(s_3),s_3)=0.
\end{array}
\end{equation}
Identities (\ref{RD-s_2}) and (\ref{RD-s1,s3}) clearly imply (\ref{I}). This shows that (\ref{I}) follows from (\ref{II
ident}).

In order to show that (\ref{I}) implies (\ref{II ident}), it is
enough to note that if $J\in{\mathcal Z}_{+}$ and $X,Y\in
T_{\pi(J)}M$, then the $2$-vector $X\wedge Y-JX\wedge
JY\in\Lambda^2_{+}T_{\pi(J)}M$. This follows from the easily
verifying fact that $X\wedge Y-JX\wedge JY$ is orthogonal to
$\Lambda^2_{-}T_pM$, $p=\pi(J)$, since $J$ commutes with every
endomorphism of $T_pM$ lying in $\Lambda^2_{-}T_pM$.

The components of the curvature operator ${\mathcal
R}^D:\Lambda^2TM\to\Lambda^2TM$ with respect to the decomposition
$\Lambda^2TM=\Lambda^2_{+}TM\oplus\Lambda^2_{-}TM$ have been
computed in \cite{F}. In fact, using (\ref{curv-D}), (\ref{T-tau})
and (\ref{nabla-T-tau}), we compute
$$
\begin{array}{l}
g({\mathcal R}^D
(s_1),s_1)=g(R^D(E_1,E_2)E_1,E_2)+g(R^D(E_1,E_2)E_3,E_4)\\[6pt]
\hspace{6cm}+g(R^D(E_3,E_4)E_1,E_2)+g(R^D(E_3,E_4)E_3,E_4)\\[8pt]
=-\frac{1}{4}[{\mathcal T}(E_1,E_2,E_3)^2+{\mathcal
T}(E_1,E_2,E_4)^2]-\frac{1}{2}[(\nabla_{E_1}{\mathcal
T})(E_2,E_3,E_4)
-(\nabla_{E_2}{\mathcal T})(E_3,E_4,E_1)]\\[6pt]
-\frac{1}{2}[(\nabla_{E_3}{\mathcal
T})(E_4,E_1,E_2)-(\nabla_{E_4}{\mathcal T})(E_1,E_2,E_3)]
-\frac{1}{4}[{\mathcal T}(E_3,E_4,E_1)^2+{\mathcal T}(E_3,E_4,E_2)^2]\\[6pt]
+g(R^{\nabla}(s_1),s_1)\\[6pt]
=-\frac{1}{4}||\tau||^2-\frac{1}{2}\delta\tau +
g(R^{\nabla}(s_1),s_1).
\end{array}
$$
A similar computation gives
$$
\begin{array}{c}
g({\mathcal R}^D(s_1),s_2)
=\displaystyle{\frac{1}{2}}(d\tau)(s_3)+g({\mathcal
R}^{\nabla}(s_1),s_2).
\end{array}
$$
Also,
$$
g({\mathcal R}^D(s_1),s_3)= -\frac{1}{2}(d\tau)(s_2)+g({\mathcal
R}^{\nabla}(s_1),s_3).
$$
Replacing the basis $(E_1,E_2,E_2,E_4)$ by $(E_1,E_3,E_4,E_2)$ and
$(E_1,E_4,E_2,E_3)$, we see that
\begin{equation}\label{R-D-++}
\begin{array}{c}
\displaystyle{g({\mathcal R}^D
(s_i),s_i)=-\frac{1}{4}||\tau||^2-\frac{1}{2}\delta\tau +
g(R^{\nabla}(s_i),s_i)},\quad i=1,2,3,\\[8pt]
\displaystyle{g({\mathcal
R}^D(s_i),s_j)=\frac{1}{2}(d\tau)(s_i\times s_j)+g({\mathcal
R}^{\nabla}(s_i),s_j)},\quad i\neq j.
\end{array}
\end{equation}

\medskip

Now, suppose that the structure $\mathscr{J}^m$ is integrable, $m=1$
or $2$. Then the manifold $(M,g)$ is anti-self-dual by
Theorem~\ref{Z+}, and identities (\ref{I}),  (\ref{R-D-++}), and
(\ref{dec}) imply
\begin{equation}\label{i+i}
-\frac{1}{4}||\tau||^2-\frac{1}{2}\delta\tau +\frac{s}{6}=0,\quad (d\tau)|\Lambda^2_{+}TM=0.
\end{equation}
The latter identity means that $\ast d\tau=-d\tau$, i.e., the
$2$-form $d\tau$ is anti-self-dual.

 Conversely, suppose that
identities (\ref{i+i}) are satisfied and the metric $g$ is
anti-self-dual. Then (\ref{R-D-++}) implies (\ref{I}), hence the
almost complex structure ${\mathscr J}^m$ is integrable on
${\mathcal Z}_{+}\times{\mathcal Z}_{+}$.

Changing the orientation of $M$, we obtain the integrability
condition for the restriction of ${\mathscr J}^m$ to ${\mathcal
Z}_{-}\times{\mathcal Z}_{-}$.

\end{proof}

\noindent {\bf Remarks} 1. In the case when $\tau=0$, i.e., $D$ is
the Levi-Civita connection, Theorem~\ref{Z++} coincides with
\cite[Theorem 2 (a) and (b)]{Des}

\smallskip

2.  As is well-known, if $M$ is compact, the condition that $d\tau$
is anti-self-dual implies  $d\tau=0$ by integrating the identity
$d(d\tau\wedge\tau)=-d\tau\wedge\star d\tau=-||d\tau||^2vol$.

\smallskip

3. The identity
\begin{equation}\label{conf}
\displaystyle{s=\frac{3}{2}||\tau||^2+3\delta\tau}
\end{equation}
can be interpreted in terms of the Weyl geometry. Recall that a Weyl
connection on a conformal $n$-dimensional manifold is a torsion-free
connection $\nabla^w$ which preserves the conformal structure. This
means that for every Riemannian metric $g$ in the conformal class
there exists a $1$-form $\theta_g$ such that
$\nabla^wg=\theta_g\otimes g$; obviously, such a form is unique. If
$\nabla^g$ is the Levi-Civita connection of the metric $g$,
\begin{equation}\label{Weyl}
\nabla^w_{X}Y=\nabla^g_{X}Y-\frac{1}{2}[\theta_g(X)Y+\theta_g(Y)X-g(X,Y)\theta_g^{\sharp}],
\end{equation}
where $\theta_g^{\sharp}$ is the dual vector field of the form
$\theta_g$ with respect to the metric $g$,
$g(\theta^{\sharp},Z)=\theta_g(Z)$. If $\widetilde{g}=e^{f}g$ is
another Riemannian metric in the conformal class, where $f$ is a
smooth function, $\theta_{\widetilde{g}}=df+\theta_{g}$. Hence the
condition $d\theta_g=0$ does not depend on the choice of the metric
$g$. In this case, we say that $\nabla^w$ determines a closed Weyl
structure. If $d\theta_g=0$, then locally $\theta_g=d\psi$ for a
smooth function $\psi$, so $\nabla^w$ concides locally with the
Levi-Civita connection of the metric $e^{-\psi}g$. The condition
that the form $\theta_g$ is exact also does not depend on the choice
of $g$, and a Weyl structure with exact $\theta_g$ is called exact.

It follows from (\ref{Weyl}) that the Ricci tensors $Ric^w$ and
$Ric^g$ of the connections $\nabla^w$ and $\nabla^g$ are related by
$$
\begin{array}{c}
Ric^w(X,Y)=Ric^g(X,Y)+\displaystyle{\frac{n-1}{2}(\nabla^g_{X}\theta_g)(Y)-\frac{1}{2}(\nabla^g_{Y}\theta_g)(X)}\\[10pt]
\hspace{4cm}-\displaystyle{\frac{n-2}{4}[||\theta_g||^2g(X,Y)-\theta_g(X)\theta_g(Y)]-\frac{1}{2}(\delta^g\theta_g)
g(X,Y)},
\end{array}
$$
where the norm and the codifferential are taken with respect to the
metric $g$. Hence  the symmetric part
$Ric^{sym}=\frac{1}{2}[Ric^w(X,Y)+Ric^w(Y,X)]$ of the Ricci tensor
of the connection $\nabla^w$ is
\begin{equation}\label{Ricci-W}
\begin{array}{c}
Ric^{sym}(X,Y)=Ric^g(X,Y)+\displaystyle{\frac{n-2}{2}}[(\nabla^g_X\theta_g)(Y)+(\nabla^g_Y\theta_g)(X)]\\[8pt]
\hspace{4cm}-\displaystyle{\frac{n-2}{4}[||\theta_g||^2g(X,Y)-\theta_g(X)\theta_g(Y)]-\frac{1}{2}(\delta^g\theta_g)}g(X,Y)].
\end{array}
\end{equation}
Therefore, if $s^g$ is the scalar curvature of $g$, the trace of
$Ric^{sym}$ with respect to $g$ is
\begin{equation}\label{conf-sc}
s_g^{w}=s^g-\frac{(n-1)(n-2)}{4}||\theta_g||^2-(n-1)\delta^g\theta_g.
\end{equation}
Clearly, if $\widetilde{g}=e^{f}g$, then
$s_{\widetilde{g}}^{w}=e^{-f}s_g^w$. The function $s_g^w$ is called
the conformal scalar curvature with respect to $g$. Its vanishing
does not depend on the choice of the metric $g$ in the conformal
class. We note also that  $s_g^w$ represents a section of the
so-called line bundle of weight $-2$, see, for example,
\cite{CalPed,Gaud}.

\smallskip

Now, consider the conformal structure on the Riemannian $4$-
manifold $M$ determined by the metric $g$. Define a connection
$\nabla^w$ on $M$ by (\ref{Weyl}) with $\theta_g=\tau$. Then
$\nabla^w$ is a Weyl connection and the identity (\ref{conf}) is
equivalent to the vanishing of the conformal scalar curvature.

\smallskip

As is well-known, the Weyl tensor is conformally invariant, hence so
are the conditions ${\mathcal W}_{+}=0$ and  ${\mathcal W}_{-}=0$.
Therefore, if $(M,g,\tau)$ satisfies the conditions of
Theorem~\ref{Z++} and $f$ is a smooth function on $M$, then
$(M,e^fg,df+\tau)$ also satisfies these conditions.

As we have noticed, $d\tau=0$ in the case of a compact manifold $M$.
Suppose that $\tau=df$ for a smooth function $f$ on $M$. Let
$\widetilde\delta$ be the codifferential with respect to the metric
$\widetilde{g}=e^{-2f}g$. Using the well-known formula
$\widetilde{\delta}\tau=e^{2f}(\delta\tau+2\tau(grad\, f))$, we see
that if $\tau$ satisfies (\ref{conf}),
$$
e^{2f}s=-\frac{9}{2}e^{2f}||df||^2_g+3\widetilde{\delta}df.
$$
Hence, if $\widetilde\mu$ is the volume form with respect to the
metric $\widetilde{g}$,
$$
\int_{M}e^{2f}s\,\widetilde{\mu}=-\frac{9}{2}\int_M
e^{2f}||df||^2_g\,\widetilde{\mu}\leq 0.
$$
Also, if $\mu$ is the volume form with respect to the metric $g$, it
follows from (\ref{conf}) that
$$
\int_M s\,\mu=\frac{3}{2}\int_M ||\tau||^2_g\mu\geq 0.
$$
Therefore either $s\equiv 0$ or $s$ takes values with different
signs. Thus, if $H^1(M,{\mathbb R})=0$ and $s=const\neq 0$, there
does not exist $1$-form $\tau$ satisfying (\ref{conf}). In
particular, this holds for  $M=S^4$ and $M={\mathbb C}{\mathbb
P}^2$.

\medskip

\noindent {\bf Examples}. 1.  Let $(M,g,J)$ be a Hermitian surface
with fundamental $2$-form $\Omega(X,Y)=g(X,JY)$ and Lee from
$\theta=\delta\Omega\circ J$. Denote by $s$ and $s^{\ast}$ the
scalar and $\ast$-scalar curvatures. Then, by \cite[Theorem
3.1]{Vais},
$$
s-s^{\ast}=||\theta||^2+2\delta\theta.
$$
According to \cite[Proposition 6.4]{Koda}, a Hermitian surface is
anti-self-dual if and only if $s=3s^{\ast}$ and the $2$-form
$d\theta$ is anti-self-dual. Therefore the Lee form of an
anti-self-dual Hermitian surface satisfies the conditions of
Theorem~\ref{Z++} with $\tau=\theta$. Apparently, if the surface is
K\"ahler, $\theta=0$. Note that, as is well-known, a K\"ahler
surface is anti-self-dual if and only if it is scalar-flat. In
particular, by the famous Yau's solution of the Calabi conjecture,
every compact K\"ahler surface with vanishing first Chern class
admits an anti-self-dual K\"ahler metric. There are many other
constructions of scalar-flat K\"ahler surfaces and we refer to
LeBrun's survey paper \cite{LeBrun95} and the literature therein, as
well as to \cite{LeBrun88, KP, RS}, for results in this area. The
standard Hermitian structure on the Hopf surface $S^3\times S^1$ is
non-K\"ahler, conformally flat (${\mathcal W}_{+}={\mathcal
W}_{-}=0$) and locally conformally K\"ahler ($d\theta=0$), so it
satisfies the conditions of Theorem~\ref{Z++}. C. LeBrun
\cite{LeBrun91} has constructed non-K\"ahler anti-self-dual
Hermitian metrics on the blow-ups $(S^3\times S^1)\sharp\,
n\overline{{\mathbb C}{\mathbb P}^2}$ of the Hopf surface. I. Kim
\cite{Kim} has shown the existence of anti-self-dual strictly almost
K\"ahler structures on ${\mathbb C}{\mathbb P}^2\sharp\,
n\overline{{\mathbb C}{\mathbb P}^2}$, $n\geq 11$, $(S^2\times
\Sigma)\sharp\, n\overline{{\mathbb C}{\mathbb P}^2}$, $(S^2\times
T^2)\sharp\, n\overline{{\mathbb C}{\mathbb P}^2}$, $n\geq 6$, where
$\Sigma$ is a compact Riemann surface of genus $\geq 2$ and $T^2$ is
the  torus.

\smallskip

\noindent 2. Following \cite{SS}, consider a real $4$-dimensional
vector space $V$ oriented by a basis $(E_1,...,E_4)$. For
$\lambda\in{\mathbb R}$, let $\frak{g}_{\lambda}$ be the Lie algebra
on $V$ defined by the relations
$$
[E_1,E_2]=E_2-\lambda E_3,\quad [E_1,E_3]=\lambda E_2+E_3,\quad
[E_1,E_4]=2E_4, \quad [E_2,E_3]=-E_4,
$$
and the other brackets equal zero. It is easy to check that this
algebra is solvable.

It is shown in \cite{SS} that $\frak{g}_{\lambda}$ and
$\frak{g}_{\lambda'}$ are not isomorphic if $\lambda'\neq
\pm\lambda$. The algebras $\frak{g}_{\lambda}$ and
$\frak{g}_{-\lambda}$ are isomorphic by the map which  just  changes
the signs of $E_3$ and $E_4$.

Let $G_{\lambda}$ be the simply connected Lie group corresponding to
$\frak{g}_{\lambda}$. This group is not compact. Otherwise, it would
admit an inner product such that all endomorphisms $ad_X$,
$X\in\frak{g}_{\lambda}$, were skew-symmetric. Hence $E_4$ would
have zero length since $ad_{E_1}(E_4)=2E_4$, a contradiction. Note
also that $Trace\,ad_{E_1}\neq 0$, so the group  $G_{\lambda}$ is
not unimodular, hence it does not admit a co-compact lattice.

For $k>0$, let $g_k$ be the left invariant metric on $G_{\lambda}$
for which $(\frac{1}{k}E_1,E_2,E_3,E_4)$ is an orthonormal basis. It
is proved in \cite{SS} that the Riemannian manifolds
$(G_{\lambda},g_k)$ and $(G_{\lambda'},g_{k'})$ are isometric if and
only if $k=k'$. Hence the curvature of $(G_{\lambda},g_k)$ does not
depend on $\lambda$. According to \cite{SS}, the Ricci tensor of
$g_k$ is given by
\begin{equation}\label{Ric-gk}
Ric(E_1,E_1)=6,\quad
Ric(E_2,E_2)=Ric(E_3,E_3)=\frac{4}{k^2}+\frac{1}{2},\quad
Ric(E_4,E_4)=\frac{8}{k^2}-\frac{1}{2},
\end{equation}
and $Ric(E_i,E_j)=0$ for $i\neq j$. Hence the scalar curvature of
$g_k$ is
$$
s_k=\frac{22}{k^2}+\frac{1}{2}.
$$
Also, the two halves of the Weyl operator are computed in \cite{SS}.
The result there shows that ${\mathcal W}_{+}=0$ if and only if
$k\in\{1,2\}$ and, for these values of $k$, ${\mathcal W}_{-}\neq
0$. The orientation of $G_{\lambda}$ is, of course, just a matter of
choice and changing the orientation interchanges the roles of
${\mathcal W}_{+}$ and  ${\mathcal W}_{-}$.

It is proved in \cite{SS} that if $G$ is  an oriented
four-dimensional Lie group admitting a left invariant Riemannian
metric $g$ such that ${\mathcal W}_{+}=0$ and ${\mathcal W}_{-}\neq
0$, then the Lie algebra of $G$ is isomorphic to
$\frak{g}_{\lambda}$ for some $\lambda\geq 0$ and $g$ is locally
homothetic to either $g_1$ or $g_2$.

Extend $(E_1,...,E_4)$ to a frame of left-invariant vector fields on
$G_{\lambda}$.  Let $(\alpha^1,...,\alpha^4)$ be the dual frame to
$(E_1,...,E_4)$. Then
$$
\begin{array}{c}
d\alpha^1=0,\quad
d\alpha^2=-\alpha^1\wedge\alpha^2-\lambda\alpha^1\wedge\alpha^3,
\\[6pt]
d\alpha^3=\lambda\alpha^1\wedge\alpha^2-\alpha^1\wedge\alpha^3,
\quad d\alpha^4=-2\alpha^1\wedge\alpha^4+\alpha^2\wedge\alpha^3.
\end{array}
$$

Let $\tau=\sum_{i=1}^4\mu_i\alpha^i$, $\mu_i\in{\mathbb R}$,   be a
left invariant $1$-form on $G_{\lambda}$. Then, if $\delta^k$ is the
codifferential with respect to the metric $g_k$,
$\delta^k\tau=\frac{4}{k^2}\mu_1$. Let $s_1^{+},s_2^{+},s_3^{+}$ be
the basis of $\Lambda^2_{+}\frak{g}_{\lambda}$ defined via
(\ref{s-basis}) by means of the orthonormal basis
$(\frac{1}{k}E_1,E_2,E_3,E_4)$, where $k=1$ or $k=2$. The identities
$d\tau(s_i^{+})=0$ for $i=1,2,3$ are equivalent to
$\mu_2=\mu_3=\mu_4$ in the case $k=1$ and $\mu_2=\mu_3=0$ when
$k=2$.   Thus, if $k=1$, identity (\ref{conf}) is satisfied for
$\mu_1=-4\pm\sqrt{31}$; if $k=2$, it is satisfied iff  $\mu_1$ and
$\mu_4$ are related by $\mu_1^2+8\mu_1-16+4\mu_4^2=0$.

\smallskip

\begin{theorem}\label{+-}
The almost complex structure $\mathscr{J}^m$ on ${\mathcal
Z}_{+}\times {\mathcal Z}_{-}$ (resp. ${\mathcal Z}_{-}\times
{\mathcal Z}_{+}$), $m=1$ or $2$,  is integrable if and only if the
metric $g$ is anti-self-dual (resp. self-dual) and its Ricci tensor
$\rho$ is given by
$$
\rho(X,Y)=\big[{\mathscr
S}(\nabla\tau)-\frac{1}{2}\tau\otimes\tau+\frac{1}{8}(2s+2\delta\tau+||\tau||^2)g\big](X,Y),
$$
where ${\mathscr S}$ stands for the symmetrization of a bilinear
form.
\end{theorem}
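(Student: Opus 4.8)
The plan is to mimic closely the proof of Theorem~\ref{Z++}, but now with the two twistor factors lying in \emph{different} connected components. First I would invoke Proposition~\ref{Ni} and Lemma~\ref{horN}: exactly as before, the horizontal--horizontal component ${\mathcal H}{\mathscr N}_m(X^h,Y^h)$ vanishes by Lemma~\ref{horN}, while the vanishing of the vertical component ${\mathcal V}{\mathscr N}_m(X^h,Y^h)$ and of ${\mathscr N}_m(X^h,V)$ (for $m=1,2$) together become an identity of the type (\ref{int-1}), now with $J=(J_1,J_2)$, $J_1\in{\mathcal Z}_+$, $J_2\in{\mathcal Z}_-$. Writing the curvature operator $\mathcal R^D$ in its block form relative to $\Lambda^2TM=\Lambda^2_+TM\oplus\Lambda^2_-TM$, the $i=1$ instance (coming from $R^D(\cdot,\cdot)J_1$ with $J_1$ positive) again reproduces the Atiyah--Hitchin--Singer condition on ${\mathcal Z}_+$, i.e.\ anti-self-duality, by Theorem~\ref{Z+}. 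The new content is the $i=2$ instance, where the ``test'' bivector on the right, built from $J_2$, lies in $\Lambda^2_-T_pM$.

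The key step is therefore to show that the $i=2$ identity of (\ref{int-1}) is equivalent to a condition on the \emph{mixed block} $g(\mathcal R^D(a),b)$ for $a\in\Lambda^2_+TM$, $b\in\Lambda^2_-TM$, rather than (as in Theorem~\ref{Z++}) on the $\Lambda^2_+$-to-$\Lambda^2_+$ block. Indeed, as observed in the proof of Theorem~\ref{Z++}, for $J_1\in{\mathcal Z}_+$ the bivector $X\wedge Y-J_1X\wedge J_1Y$ lies in $\Lambda^2_+T_pM$, and dually, for $J_2\in{\mathcal Z}_-$, the bivector $Z\wedge U-J_2Z\wedge J_2U$ lies in $\Lambda^2_-T_pM$ (it is orthogonal to $\Lambda^2_+T_pM$ since $J_2$ commutes with every endomorphism in $\Lambda^2_+T_pM$). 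Running through the choices $J_1=s_i^+$, $J_2=s_j^-$ with a convenient oriented orthonormal basis $(E_1,\dots,E_4)$, and using the $(\pm)$-sign distinction between $m=1$ and $m=2$ exactly as in the previous proof (apply the identity with $(J_1,J_2)=(s_i^+,s_j^-)$ and $(s_i^+,-s_j^-)$), I expect to conclude that integrability of $\mathscr J^m$ on ${\mathcal Z}_+\times{\mathcal Z}_-$ is equivalent to anti-self-duality together with
$$
g(\mathcal R^D(a),b)=0\quad\text{for all } a\in\Lambda^2_+TM,\ b\in\Lambda^2_-TM,
$$
i.e.\ the vanishing of the mixed block of $\mathcal R^D$.

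It remains to translate this mixed-block vanishing into the stated formula for the Ricci tensor $\rho$. For the \emph{Levi-Civita} curvature the mixed block is, by the decomposition (\ref{dec}), precisely the operator $\mathcal B$ attached to the traceless Ricci tensor (both $\mathcal W_\pm$ and the scalar part are block-diagonal), so $g(\mathcal R^\nabla(a),b)=g(\mathcal B(a),b)$ for $a\in\Lambda^2_+,b\in\Lambda^2_-$. For the connection $D$, I would substitute the curvature relation (\ref{curv-D}) and compute the mixed components $g(\mathcal R^D(s_i^+),s_j^-)$ in terms of $\tau$ using (\ref{T-tau})--(\ref{nabla-T-tau}), in the same spirit as the computation of (\ref{R-D-++}) but now for off-diagonal pairs. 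Each component $g(\mathcal R^D(s_i^+),s_j^-)=0$ is a scalar equation relating a symmetrized covariant derivative of $\tau$, a quadratic term $\tau\otimes\tau$, norm and codifferential terms, and the traceless Ricci tensor; collecting the nine equations $i,j\in\{1,2,3\}$ and re-expressing them basis-freely gives exactly
$$
\rho(X,Y)=\Big[{\mathscr S}(\nabla\tau)-\tfrac12\tau\otimes\tau+\tfrac18\big(2s+2\delta\tau+\|\tau\|^2\big)g\Big](X,Y).
$$
The bookkeeping of signs and of the $1/2$, $1/4$ factors in (\ref{curv-D}) against the $1/2$-normalization of the metric on $\Lambda^2TM$ is the main obstacle: one has to be careful that the symmetrization picks out the right combination $\rho(X,Y)+\rho(Y,X)$ and that the pure-trace piece is correctly isolated (note the trace of the right-hand side over $g$ must return $s$, which provides a useful consistency check on the coefficient of $g$). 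Finally, reversing the orientation of $M$ swaps $\Lambda^2_+$ with $\Lambda^2_-$ and hence ${\mathcal Z}_+\times{\mathcal Z}_-$ with ${\mathcal Z}_-\times{\mathcal Z}_+$ and ``anti-self-dual'' with ``self-dual'', which yields the parenthetical statement and completes the proof.
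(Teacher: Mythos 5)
Your proposal is correct and follows essentially the same route as the paper: reduce integrability via Proposition~\ref{Ni} and Lemma~\ref{horN} to anti-self-duality (the $J_1$-vertical part, i.e.\ Theorem~\ref{Z+}) plus the vanishing of the mixed block $g({\mathcal R}^D(s_i^{+}),s_j^{-})=0$, using that $X\wedge Y-J_1X\wedge J_1Y\in\Lambda^2_{+}$ and $Z\wedge U-J_2Z\wedge J_2U\in\Lambda^2_{-}$ together with the sign trick $J_2\mapsto -J_2$ distinguishing $m=1,2$, and then convert that block condition into the Ricci identity through (\ref{curv-D}), (\ref{T-tau})--(\ref{nabla-T-tau}) and the fact that the mixed block of ${\mathcal R}^{\nabla}$ is ${\mathcal B}$. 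The only difference is that the paper carries out the explicit basis manipulations (cyclic replacements of $(E_1,E_2,E_3,E_4)$) that you leave as expected bookkeeping, and your trace consistency check on the coefficient of $g$ is sound.
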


\begin{proof}
By Proposition~\ref{Ni} and Lemma~\ref{horN},  the almost complex structure $\mathscr{J}^m$ on ${\mathcal Z}_{+}\times
{\mathcal Z}_{-}$  is integrable if and only the metric $g$ is anti-self-dual and for every $J=(J_1,J_2)\in {\mathcal
Z}_{+}\times {\mathcal Z}_{-}$
\begin{equation}\label{int}
\begin{array}{c}
g({\mathcal R}^D(X\wedge Y-J_1X\wedge J_1Y),Z\wedge U-J_2Z\wedge J_2U)\\[6pt]
\hspace{2cm}=(-1)^{m+1} g({\mathcal R}^D(J_1X\wedge Y+X\wedge J_1Y),J_2Z\wedge U+Z\wedge J_2U)
\end{array}
\end{equation}
for every $X,Y,Z,U\in T_{\pi(J)}M$, $m=1,2$.

 Let $(E_1,...,E_4)$ be an oriented orthonormal basis
of a tangent space $T_pM$, and define $\{s_i^{\pm}\}$ via
(\ref{s-basis}). Then identity (\ref{int}) is equivalent to
\begin{equation}\label{intij} g(R^D(s_i^{+}),s_j^{-})=0,\quad
i,j=1,2,3.
\end{equation}
Indeed, if the latter identity holds, the identity (\ref{int}) holds as well since $A\wedge B-J_1A\wedge J_1B\in
\Lambda^2_{+}T_{\pi(J)}M$, $A\wedge B-J_2A\wedge J_2B\in \Lambda^2_{-}T_{\pi(J)}M$ for every $A,B\in T_{\pi(J)}M$.
Conversely, suppose that (\ref{int}) is satisfied. Then this identity with $J_1=s_1^{+}$, $J_2=(-1)^{m+1}s_1^{-}$,
$(X,Y)=(Z,U)=(E_1,E_3)$ gives
$$
g(R^D(s_2^{+}),s_2^{-})=-g(R^D(s_3^{+}),s_3^{-}).
$$
Replacing the basis $(E_1,E_2,E_3,E_4)$ by $(E_1,E_3,E_4,E_2)$ and
$(E_1,E_4,E_2,E_3)$, we get from the identity above
$$
g(R^D(s_3^{+}),s_3^{-})=-g(R^D(s_1^{+}),s_1^{-}),\quad g(R^D(s_1^{+}),s_1^{-})=-g(R^D(s_2^{+}),s_2^{-}).
$$
It follows that
\begin{equation}\label{ii}
g(R^D(s_i^{+}),s_i^{-})=0,\quad i=1,2,3.
\end{equation}
Identity (\ref{int}) with $J_1=s_1^{+}$, $(X,Y)=(E_1,E_4)$, ~
$J_2=(-1)^{m+1}s_3^{-}$, $(Z,U)=(E_1,E_2)$ implies
$g(R^D(s_3^{+}),s_1^{-})=g(R^D(s_2^{+}),s_2^{-})$. Setting
$J_1=s_2^{+}$, $(X,Y)=(E_1,E_4)$, $J_2=(-1)^{m+1}s_3^{-}$,
$(Z,U)=(E_1,E_3)$,  we also get
$g(R^D(s_3^{+}),s_2^{-})=g(R^D(s_1^{+}),s_1^{-})$. Thus,
$$
g(R^D(s_3^{+}),s_1^{-})=g(R^D(s_3^{+}),s_2^{-})=0.
$$
Again replacing  the basis $(E_1,E_2,E_3,E_4)$ by the bases
$(E_1,E_3,E_4,E_2)$ and \\$(E_1,E_4,E_2,E_3)$, we see that
\begin{equation}\label{ij}
g(R^D(s_i^{+}),s_j^{-})=0, \quad i\neq j.
\end{equation}
\smallskip
Now, using (\ref{curv-D}), we compute
$$
\begin{array}{c}
g({\mathcal
R}^D(s_1^{+}),s_1^{-})=\frac{1}{2}\big[(\nabla_{E_1}{\mathcal
T})(E_{234})-(\nabla_{E_2}{\mathcal
T})(E_{134})-(\nabla_{E_3}{\mathcal
T})(E_{124}))+(\nabla_{E_4}{\mathcal T})(E_{123})\big]\\[8pt]
+\frac{1}{4}\big[-{\mathcal T}_{123}^2-{\mathcal T}_{124}^2
+{\mathcal T}_{134}^2 +{\mathcal T}_{234}^2\big]+g({\mathcal
R}^{\nabla}(s_1^{+}),s_1^{-}).
\end{array}
$$
Moreover, setting $\rho_{ij}=\rho(E_i,E_j)$, we have by (\ref{B})
\begin{equation}\label{B11}
g({\mathcal R}^{\nabla}(s_1^{+}),s_1^{-})=g({\mathcal
B}(s_1^{+}),s_1^{-})=\frac{1}{2}\big[\rho_{11}+\rho_{22}
-\rho_{33}-\rho_{44}\big].
\end{equation}
Thus, taking into account (\ref{nabla-T-tau}), we obtain
\begin{equation}\label{11}
\begin{array}{c}
g({\mathcal
R}^D(s_1^{+}),s_1^{-})=\frac{1}{2}\big[-(\nabla_{E_1}{\tau})(E_{1})-(\nabla_{E_2}{\tau})(E_{2})
+(\nabla_{E_3}{\tau})(E_{3}))+(\nabla_{E_4}{\tau})(E_{4})\big]\\[8pt]
+\frac{1}{4}\big[-{\tau}_{4}^2-{\tau}_{3}^2 +{\tau}_{2}^2
+{\tau}_{1}^2\big]+\frac{1}{2}\big[\rho_{11}+\rho_{22}
-\rho_{33}-\rho_{44}\big],
\end{array}
\end{equation}
where $\tau_i=\tau(E_i)$, $i=1,..,4$. Applying the latter identity
for the bases $(E_1,E_3,E_4,E_2)$ and $(E_1,E_4,E_2,E_3)$, we get
$$
0=4\sum\limits_{i=1}^3g({\mathcal
R}^D(s_i^{+}),s_i^{-})=2\big[-4(\nabla_{E_1}\tau)(E_1)-\delta\tau\big]-||\tau||^2+4\tau_{1}^2+
8\rho_{11}-2s.
$$
For a bilinear form $\alpha$, let $({\mathscr
S}\alpha)(X,Y)=\frac{1}{2}[\alpha(X,Y)+\alpha(Y,X)]$ be the
symmetrization of $\alpha$. Then the latter identity can be written
as
\begin{equation}\label{rhoii}
\rho(E_1,E_1)=\big[{\mathscr
S}(\nabla\tau)-\frac{1}{2}\tau\otimes\tau+\frac{1}{8}(2s+2\delta\tau+||\tau||^2)g\big](E_1,E_1).
\end{equation}
Moreover,
$$
\begin{array}{c}
0=g({\mathcal R}^D(s_1^{+}),s_2^{-})+g({\mathcal
R}^D(s_2^{+}),s_1^{-})=(\nabla_{E_2}{\mathcal
T})(E_{124})-(\nabla_{E_3}{\mathcal T})(E_{134})
-{\mathcal T}_{124}{\mathcal T}_{134}\\[8pt]
+g({\mathcal B}(s_1^{+}),s_2^{-})+g({\mathcal B}(s_2^{+}),s_1^{-})\\[8pt]
=
-(\nabla_{E_2}\tau)(E_3)-(\nabla_{E_3}\tau)(E_3)+\tau(E_3)\tau(E_2)+2\rho_{23}.
\end{array}
$$
Therefore
\begin{equation}\label{rhoij}
\rho(E_2,E_3)=\big[{\mathscr
S}(\nabla\tau)-\frac{1}{2}\tau\otimes\tau\big](E_2,E_3).
\end{equation}
If follows from (\ref{rhoii}) and (\ref{rhoij}) that
$$
\rho(E_i,E_j)=\big[{\mathscr
S}(\nabla\tau)-\frac{1}{2}\tau\otimes\tau+\frac{1}{8}(2s+2\delta\tau+||\tau||^2)g\big](E_i,E_j),\quad
i,j=1,...,4.
$$
Therefore, for every $X,Y\in TM$,
\begin{equation}\label{rho}
\rho(X,Y)=\big[{\mathscr
S}(\nabla\tau)-\frac{1}{2}\tau\otimes\tau+\frac{1}{8}(2s+2\delta\tau+||\tau||^2)g\big](X,Y).
\end{equation}

Conversely, suppose that the Ricci tensor of $g$ is given by
(\ref{rho}). Then it follows from (\ref{rhoii}) that $g({\mathcal
R}^D(s_1^{+}),s_1^{-})=0$. An easy computation making use of
(\ref{curv-D}), (\ref{nabla-T-tau}), (\ref{B}), and (\ref{rhoij})
gives  $g({\mathcal R}^D(s_1^{+}),s_2^{-})=0$ and $g({\mathcal
R}^D(s_1^{+}),s_3^{-})=0$. Replacing the basis $(E_1,E_2,E_3,E_4)$
by $(E_1,E_3,E_4,E_2)$ and $(E_1,E_4,E_2,E_3)$, we see that
$g({\mathcal R}^D(s_i^{+}),s_j^{-})=0$ for every $i,j=1,2,3$.
Therefore if $(M,g)$ is anti-self-dual and its Ricci tensor is given
by (\ref{rho}),  the almost complex structure
$\mathscr{J}^m|{\mathcal Z}_{+}\times {\mathcal Z}_{-}$  is
integrable.
\end{proof}

\noindent {\bf Remarks} 1. In the case when $\tau=0$, i.e., $D$ is
the Levi-Civita connection, Theorem~\ref{+-} coincides with
\cite[Theorem 2 (c) and (d)]{Des}

\smallskip

2. The identity
\begin{equation}\label{WE}
\rho(X,Y)=\big[{\mathscr
S}(\nabla\tau)-\frac{1}{2}\tau\otimes\tau+\frac{1}{8}(2s+2\delta\tau+||\tau||^2)g\big](X,Y)
\end{equation}
can also be interpreted in terms of the Weyl geometry. Recall that a
 $n$-dimensional conformal manifold with a Weyl connection $\nabla^ww$ is said to be Einstein-Weyl if the
symmetric part $Ric^{sym}$  of the Ricci tensor of $\nabla^w$ is
proportional to one (hence to every) metric $g$ in the conformal
class. The proportionality factor is clearly $s^w_g/n$, where
$s^w_g$ is the conformal scalar curvature with respect to $g$.

Consider the conformal structure on the Riemannian $4$- manifold $M$
determined by the metric $g$,  and define a connection $\nabla^w$ on
$M$ by (\ref{Weyl}) with $\theta_g=-\tau$. Then it follows from
(\ref{Ricci-W}) and (\ref{conf-sc}) that identity (\ref{WE}) is
equivalent to $\nabla^w$ being Einstein-Weyl.  Hence, if
$(M,g,\tau)$ satisfies the conditions of Theorem~\ref{+-} and $f$ is
a smooth function on $M$, then $(M,e^fg,-df+\tau)$ also satisfies
these conditions.

\smallskip

It has been proved by P. Gauduchon \cite{Gaud} and H. Pedersen - A.
Swann \cite{PedSwa-PLMS} that an Einstein-Weyl structure on a
compact self-dual manifold is closed. Thus, we have the following.

\begin{cor}\label{compact}
If the base manifold $M$ is compact and the almost complex structure
$\mathscr{J}^m$ on ${\mathcal Z}_{+}\times {\mathcal Z}_{-}$ (resp.
${\mathcal Z}_{-}\times {\mathcal Z}_{+}$), $m=1$ or $2$,  is
integrable, then $d\tau=0$.
\end{cor}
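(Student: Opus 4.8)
The plan is to combine Theorem~\ref{+-} with the two known rigidity results quoted just above the statement. First I would recall that, by Theorem~\ref{+-}, integrability of $\mathscr{J}^m$ ($m=1$ or $2$) on ${\mathcal Z}_{+}\times{\mathcal Z}_{-}$ (respectively ${\mathcal Z}_{-}\times{\mathcal Z}_{+}$) forces the metric $g$ to be anti-self-dual (respectively self-dual) and forces the Ricci-tensor identity \eqref{WE} to hold. Then, as explained in Remark~2 following Theorem~\ref{+-}, identity \eqref{WE} is precisely the statement that the Weyl connection $\nabla^w$ defined by \eqref{Weyl} with $\theta_g=-\tau$ is Einstein--Weyl on the conformal class of $g$.

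The second step is to invoke the compactness hypothesis. Since $M$ is compact and $(M,g)$ is (anti-)self-dual, the Gauduchon / Pedersen--Swann theorem (\cite{Gaud}, \cite{PedSwa-PLMS}) quoted immediately before the corollary applies: an Einstein--Weyl structure on a compact self-dual manifold is closed. Here ``self-dual'' is understood with respect to the orientation for which $\mathcal{W}_{-}=0$; in the case of ${\mathcal Z}_{+}\times{\mathcal Z}_{-}$ the metric is anti-self-dual for the given orientation, hence self-dual for the reversed orientation, and the cited theorem is orientation-independent in its hypothesis once one fixes the correct convention, so it applies in both cases. Closedness of the Weyl structure means $d\theta_g=0$, i.e.\ $d\tau=0$, which is exactly the conclusion.

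Concretely, I would write: suppose $M$ is compact and $\mathscr{J}^m$ is integrable on one of the two components ${\mathcal Z}_{\pm}\times{\mathcal Z}_{\mp}$. By Theorem~\ref{+-} the metric $g$ is (anti-)self-dual and its Ricci tensor satisfies \eqref{WE}. By Remark~2 after Theorem~\ref{+-}, the connection $\nabla^w$ given by \eqref{Weyl} with $\theta_g=-\tau$ is then an Einstein--Weyl connection on the conformal manifold $(M,[g])$. Since $M$ is compact and self-dual, the theorem of Gauduchon and of Pedersen--Swann guarantees that this Einstein--Weyl structure is closed, i.e.\ $d\theta_g=0$; as $\theta_g=-\tau$, this gives $d\tau=0$, as claimed.

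I do not anticipate a genuine obstacle here, since every ingredient is either Theorem~\ref{+-}, the explicit Weyl-geometric reformulation in the preceding remark, or the externally cited rigidity theorem. The only point requiring a little care is the bookkeeping of orientations and signs: one must check that the orientation convention under which $(M,g)$ is called ``self-dual'' in the Gauduchon / Pedersen--Swann statement matches the one produced by Theorem~\ref{+-} for each of the two bundles ${\mathcal Z}_{+}\times{\mathcal Z}_{-}$ and ${\mathcal Z}_{-}\times{\mathcal Z}_{+}$, and that the sign choice $\theta_g=-\tau$ (rather than $\theta_g=\tau$, as used in the remark after Theorem~\ref{Z++}) is the one that turns \eqref{WE} into the Einstein--Weyl equation. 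Both checks are routine given the formulas \eqref{Weyl}, \eqref{Ricci-W}, \eqref{conf-sc} already recorded in the paper.
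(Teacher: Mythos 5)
Your proposal is correct and follows exactly the paper's route: Theorem~\ref{+-} gives (anti-)self-duality together with identity \eqref{WE}, Remark~2 identifies \eqref{WE} with the Einstein--Weyl condition for the Weyl connection with $\theta_g=-\tau$, and the Gauduchon / Pedersen--Swann theorem on compact self-dual Einstein--Weyl manifolds then yields $d\tau=0$ (the orientation bookkeeping you mention is harmless, since reversing the orientation merely interchanges ${\mathcal W}_{+}$ and ${\mathcal W}_{-}$, as noted in the paper). Nothing further is needed.
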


\medskip

\noindent {\bf Examples}. 1. Combining the above mentioned result by
P. Gauduchon and H. Pedersen - A. Swann with results due to H.
Pedersen - A. Swan in \cite{PedSwa-Crelle}, D. Calderbank and H.
Pedersen have given the following classification of compact
self-dual Einstein-Weyl manifold \cite[Theorem 9.8]{CalPed}: A
compact self-dual Einstein-Weyl $4$-manifold is isometric to $S^4$,
${\mathbb C}{\mathbb P}^2$, or an Einstein manifold of negative
scalar curvature, or is covered by a flat torus, a $K3$-surface or a
coordinate quaternionic Hopf surface. Thus, these manifolds satisfy
the conditions of Theorem~\ref{+-}.  We refer to \cite{CalPed} and
the literature therein for details.

\smallskip

\noindent 2. Consider the Lie group $G_{\lambda}$ with the (anti-)
self-dual metric $g_k$, $k=1,2$, defined above. Then a
left-invariant $1$-form $\tau$ satisfies identity (\ref{WE}) if and
only if $\tau=0$ and $k=2$ (i.e., the metric $g_2$ is Einstein).

\smallskip

\noindent 3. Other non-compact examples of Einstein-Weyl manifolds
can be found in \cite{Bonn} (see also \cite{CalPed}).

\smallskip

\centerline{{\bf Acknowledgements}}

 The author would like to thank V. Apostolov
and A. Swan for helpful discussions and suggestions. He is also
grateful to the referee whose remarks  helped to improve the final
version of the paper.

\end{document}